\documentclass{elsarticle}
\usepackage{array,amsmath,amstext,amssymb,amsbsy,graphicx,a4wide}
\usepackage{pdfsync}
 \usepackage{color}
\usepackage{txfonts}

\newcommand{\tn}{|||}
\newcommand{\bfs}{\boldsymbol{s}}
\newcommand{\bfE}{\boldsymbol{E}}

\newcommand{\tb}{\tilde{b}}

\newcommand{\bfa}{{\boldsymbol a}}
\newcommand{\bfb}{{\boldsymbol b}}
\newcommand{\bfv}{{\boldsymbol v}}
\newcommand{\bfc}{{\boldsymbol c}}

\newcommand{\bfe}{{\boldsymbol e}}

\newcommand{\bfzero}{{\boldsymbol 0}}

\newcommand{\mcT}{\mathcal{T}}
\newcommand{\mcF}{\mathcal{F}}
\newcommand{\mcE}{\mathcal{E}}

\newcommand{\bfu}{{\boldsymbol u}}
\newcommand{\IR}{{\mathbb{R}}}

\newcommand{\bfw}{{\boldsymbol w}}
\newcommand{\bfn}{{\boldsymbol n}}

\newcommand{\bft}{{\boldsymbol t}}

\newcommand{\bfeps}{{\boldsymbol\varepsilon}}
\newcommand{\bff}{{\boldsymbol f}}

\newcommand{\bfx}{{\boldsymbol x}}

\newcommand{\bbR}{{\mathbb{R}}}

\newtheorem{lem}{Lemma}[section]

\newtheorem{thm}{Theorem}[section]
\newtheorem{rem}{Remark}[section]

\begin{document}

\title{A Simple Nonconforming Tetrahedral Element for
the Stokes Equations}

\author{Peter Hansbo} 
\address{Department of Mechanical Engineering, J\"onk\"oping University, 
SE-55111 J\"onk\"oping, Sweden.}
\author{Mats G. Larson} 
\address{Department of Mathematics and Mathematical Statistics,  Ume{\aa} Univerity,  SE-90187 Ume{\aa},  Sweden.}
\date{}

\begin{abstract}
In this paper we apply a nonconforming rotated bilinear tetrahedral element 
to the Stokes problem in $\bbR^3$. We show that the element is stable in combination with a piecewise linear, continuous, 
approximation of the pressure.  This gives an approximation similar to the well known continuous $P^2$--$P^1$ Taylor--Hood element,
but with fewer degrees of freedom. The element is a stable non--conforming low order element which fulfils Korn's inequality, leading to stability also in the case where the 
Stokes equations are written on stress form for use in the case of free surface flow.
\end{abstract}
\begin{keyword}
Finite element method, nonconforming element, Stokes equations.
\end{keyword}
\maketitle


\section{INTRODUCTION}

The nonconforming rotated $Q_1$ tetrahedron is derived from the nonconforming 
hexahedral element proposed by Rannacher and Turek \cite{RaTu92} and was applied to linear and nonlinear elasticity problems in \cite{Ha11,Ha12,HaLa16}.
It has properties similar to the hexahedral element, with improved bending behaviour in elasticity, compared to the $P^1$ tetrahedron, and it allows for diagonal mass lumping for explicit time--stepping in dynamic problems.

In this paper, we investigate its properties as a Stokes element, and show that in combination with linear, continuous pressures, it 
is {\em inf--sup}\/ stable. In a sense it is thus a reduced Taylor--Hood \cite{TaHo73} element with fewer degrees of freedom. It is one of the lowest order elements that is stable for Stokes, and, unlike some other low order non--conforming elements \cite{CrRa73,RaTu92}, it fulfills Korn's inequality and can thus handle the strain form of Stokes, cf. \cite{Ha11}.

An outline of the paper is as follows: in Section \ref{q1app} we recall the rotated $Q_1$ element; in Section 3 we apply it to the Stokes equations and prove stability and convergence;
in Section \ref{numex} we give some numerical examples to show the properties of the approximation.

\section{THE ROTATED $Q_1$ APPROXIMATION}\label{q1app}

In order to define a low order approximation which is stable for the Stokes problem, Rannacher and Turek \cite{RaTu92} constructed 
a hexahedral element with nodes on the faces.  Two different kinds of continuity can now be imposed: point-wise continuity and 
average continuity and we will in this paper consider point-wise continuity.  The Rannacher-Turek element may be viewed as an 
extension of the classical Crouzeix-Raviart nonconforming tetrahedral element \cite{CrRa73},  for which point-wise and average 
continuity is identical.  

The difficulty in the construction of the Rannacher-Turek element compared to the Crouzeix-Raviart element 
is that we have six degrees of freedom while the dimension of the space of linears is four,  trilinears eight,  and quadratics ten. Therefore 
we shall start with linears and add two suitable quadratic functions in such a way that the nodal mapping is invertible. To that end let 
$\{\bfe_i\}_{i=1}^3$ be an orthonormal coordinate system 
in $\IR^3$ and let $\hat{K} = [-1,1]^3$ be the reference cube.  The midpoints of the faces of $\hat{K}$ takes the form $\{\bfx_i\}_{i=1}^6 = \{\pm \bfe_i\}_{i=1}^3$,  and therefore has one coordinate equal to $\pm 1$ and the other two equal to $0$.  See Fig.  \ref{fig:refelement} 
for the enumeration of the midpoints.  We now seek a space ${V}(\hat{K})$  of shape functions on $\hat{K}$ such that ${P}_1(\hat{K}) \subset {V}(\hat{K})\subset P_2(\hat{K})$ and that has the values at the six midpoints $\{x_i\}_{i=1}^6$ of the faces as degrees of freedom.  The linear functions  ${P}_1(\hat{K}) = \text{span}(1,x_1, x_2, x_3)$ is a four dimensional vector space and therefore we need to add two quadratic polynomials to obtain a six dimensional space.  To find these quadratic polynomials let  $N: V(\hat{K}) \ni v \mapsto [v(\bfx_i)]_{i=1}^6 \in \IR^6$ be the nodal mapping and note that $N(x_i x_j) = \bfzero$,  for  $i \neq j$,  
and therefore we are restricted to adding polynomials in $\text{span}(x_1^2, x_2^2,x_3^2)$.  Now $N(x_1^2 + x_2^2 + x_3^2) = N(1)$ and 
it is therefore natural to consider the two dimensional space
\begin{equation}
R_2(\hat{K})  = \{ v \in  \text{span}(x_1^2, x_2^2,x_3^2): N(v) \cdot N(1) = 0\} = \text{span}(x_1^2-x_2^2, x_2^2 -x_3^2)
\end{equation} 
where the two (non  unique) basis functions on the right hand side is easily chosen by observing that 
$N(b_1 x_1^2 + b_2 x_2^2 + b_3 x_3^2)\cdot N(1) = 2(b_1 + b_2 + b_3)$.  We define 
\begin{equation}
\boxed{V(\hat{K}) =  P_1(\hat{K}) + R_2(\hat{K}) =  \text{span}(1, x_1, x_2, x_3, x_1^2-x_2^2, x_2^2 -x_3^2)}
\end{equation}
and verify by explicit calculation that the coordinate mapping $N:V(\hat{K})  \rightarrow \IR^6$ is indeed invertible.   Solving for Lagrange basis functions $\varphi_i$ such that $N(\varphi_i) =[ \delta_{ij}]_{j=1}^6$,  gives  
 \begin{equation}
V(\hat{K}) = \text{span}(\varphi_i)_{i=1}^6
 \end{equation}
 where 
 \begin{equation}
\begin{array}{>{\displaystyle}l}
\varphi_1=\varphi(x_1,x_2,x_3)\\[3mm]
\varphi_2=\varphi(-x_1,x_2,x_3)\\[3mm]
\varphi_3=\varphi(x_2,x_1,x_3)\\[3mm]
\varphi_4=\varphi(-x_2,x_1,x_3)\\[3mm]
\varphi_5=\varphi(x_3,x_1,x_2)\\[3mm]
\varphi_6=\varphi(-x_3,x_1,x_2)
\end{array}
\end{equation}
and 
\begin{equation}
\varphi(x_1,x_2,x_3) = \frac16(1 + 3x_1 + 2 x_1^2 - x_2^2 - x_3^2) 
\end{equation}
The terminology rotated $Q_1$ elements is motivated by the fact that $x_1^2 - x_2^2 = (x_1 -x_2)(x_1 + x_2) = \xi_1 \xi_2$ where $\xi_1 = x_1 - x_2$ and $\xi_1+\xi_2$ are the degrees of freedom in a coordinate system rotated $\pi/2$ around the $\bfe_3$ axis.
%
%
%
%
%

In \cite{Ha11}, we made the observation that there is a reference tetrahedron $\hat{T}$
inscribed in the reference hexahedron $\hat{K}$,  with edges that are diagonals of the faces of $\hat{K}$, and thus 
the midpoints of the faces of $\hat{K}$ are precisely the midpoints of the edges of $\hat{T}$,  see Fig.  \ref{fig:refelement}. 
We note that for the reference element all edges have the same length and the centre of gravity is the origin. Using an 
affine map  $F : \hat{T} \rightarrow T$,  we map mid side nodes in the reference configuration to mid edge nodes in the 
physical configuration.
 
Let $\mathcal{T}_h:=\{T\}$ be a conforming, shape regular tetrahedrization of {$\Omega\subset\bbR^3$} with mesh parameter 
$h \in (0,h_0]$. We also let $\mcF_h$ be the set of faces and $\mcE_h$ the set of edges.  We make the standard assumption 
for the Taylor--Hood approximation \cite{BrFo91}, that
\begin{equation}\label{eq:assump}
\text{every $T\in\mathcal{T}_h$ has at least three internal edges}
\end{equation}
We define the non-conforming finite element space 
\begin{equation}\label{spacevA}
\boxed{ V_h := \{ v\in L^2(\Omega) :  v\vert_T \circ F\in V(\hat{T}) \; \;T \in \mcT_h,  \text{$v$ is continuous in the midpoints of $E \in \mcE_{h,I}$}\} 
}
\end{equation}
with midpoint continuity for all interior edges.  We note that $\text{dim}(V_h) = |\mcE_h|$,  the number of edges in $\mcT_h$.

\section{APPLICATION TO THE STOKES EQUATIONS}\label{secelast}

\subsection{Problem Formulation and Finite Element Approximation}

We consider the Stokes equations in a domain 
$\Omega$ in $\bbR^3$: find the velocity 
$\bfu = \left[u_i\right]_{i=1}^3$ and the pressure $p$ such that
\begin{equation} \label{diffelasti}
\left\{
\begin{array}{rcl}
 -\Delta\bfu +\nabla p & = &\bff \quad \mbox{in $\Omega$}\\
\nabla\cdot\bfu & = &\textbf{0}\quad \mbox{in $\Omega$}\\
\bfu& = &\textbf{0}\quad\mbox{on $\partial\Omega$}
\end{array}\right.
\end{equation}
Let us define the spaces
\begin{equation}
W := \{ \bfv: \bfv\in [H^1(\Omega)]^3,\; \bfv \; \text{is zero on $\partial\Omega$}\}
\end{equation}
and 
\begin{equation}
Q := \{ q: q\in L^2(\Omega),\; (q,1)_\Omega =0\}
\end{equation}
where $(\cdot,\cdot)_\Omega$ is the standard $L^2$ scalar product.  Then we have the weak form of (\ref{diffelasti}): 
find $(\bfu,p)\in W\times Q$ such that
\begin{equation}\label{weakelast}
\boxed{ a(\bfu,\bfv)-b(\bfv,p)+b(\bfu,q)=(\bff,\bfv)\quad\forall (\bfv,q)\in W\times Q }
\end{equation}
where the forms are given by
\begin{equation}
 a(\bfv,\bfw):= (\nabla \bfv,\nabla \bfw)_\Omega ,\quad b(\bfv,q):= (\nabla \cdot\bfv,q)_\Omega
\end{equation}

To define the finite element method, we introduce the non-conforming finite element space constructed from the space 
$V_h$ in (\ref{spacevA}) by defining
\begin{equation}
W_h := \{ \bfv: \bfv \in [V_h]^3,\; \bfv \; \text{is zero in the midpoints of $E \in \mathcal{E}_h$ on $\partial\Omega$}\}
\end{equation}
and the space of continuous piecewise linear polynomials 
\begin{equation}
Q_h := \{ q: q \in C^0(\Omega)\cap Q,\; q \; \text{is linear on $T$, $\forall T\in\mathcal{T}_h$}\}
\end{equation}
The finite element method is to find $(\bfu_h,p_h)\in W_h\times Q_h$ such that
\begin{align}\label{eq:method}
A_h(\bfu_h,p_h),  (\bfv,q)) = (\bff, \bfv)_\Omega \qquad \forall (\bfv,q) \in W_h \times Q_h
\end{align}
Here the form is defined by 
\begin{equation}
A_h((\bfv,q), (\bfw,r)) = a_h(\bfv,\bfw) -b(\bfv,r)+b(\bfw,p)
\end{equation}
with 
\begin{equation}
a_h(\bfv,\bfw):=\sum_{T\in\mathcal{T}_h}(\nabla \bfv,\nabla \bfw)_T ,\quad \quad b(\bfv,q):=\sum_{T\in\mathcal{T}_h} (\nabla \cdot\bfv,q)_T 
- (\bfn \cdot \bfv,  q)_{\partial T} = \sum_{T\in\mathcal{T}_h} (\bfv, \nabla q)_T 
\end{equation}
and $\nabla \bfv = \bfv \otimes \nabla$ is the tensor with elements  $(\bfv \otimes \nabla)_{ij} = \partial_j v_i$.
\begin{rem} We will also consider the alternative form 
\begin{align}\label{eq:btilde}
\tb(\bfv,q) = \sum_{T \in \mcT_h} (\nabla \cdot \bfv , q )_T =  \sum_{T \in \mcT_h}  - (\bfv , \nabla q)_T + (\bfn \cdot \bfv,  q)_{\partial T}
\end{align}
This form is preferable  in the presence of natural boundary conditions, and requires less numerical computations,  but we will see that the proof of the inf-sup condition 
is more complicated due to the presence of the trace term on $\partial T$ in the right hand side of (\ref{eq:btilde}).  Throughout the 
paper we will focus our presentation on the form $b$ and we will for each result add a remark on the modifications necessary to obtain 
the corresponding result for $\tb$.  Finally,  we will present an inf-sup result for $\tb$  in \ref{sec:appendix}. 
\end{rem}
\begin{rem} Unlike some nonconforming approximations, the rotated $Q_1$ approximation fulfills Korn's inequality \cite{Ha11}, which
means that we may also use the strain form of Stokes: find the velocity 
$\bfu$ and the pressure $p$ such that
\begin{equation} \label{diffelastistrain}
\left\{
\begin{array}{rcl}
 -2{\boldsymbol\nabla}\cdot\bfeps(\bfu) +\nabla p & = &\bff \quad \mbox{in $\Omega$}\\
\nabla\cdot\bfu & = &\text{\bf 0}\quad \mbox{in $\Omega$}\\
\bfu& = &\text{\bf 0}\quad\text{on $\partial\Omega$}
\end{array}\right.
\end{equation}
where
\begin{equation}
(\bfeps)_{ij} := \frac12\left(\frac{\partial u_i}{\partial x_j}+\frac{\partial u_j}{\partial x_i}\right)
\end{equation}
is the strain tensor and ${\boldsymbol\nabla}\cdot$ denotes matrix divergence. This is of interest in free surface flows where we need zero stress as a natural boundary condition, cf. Section \ref{sec:numex2}.
\end{rem}


\subsection{Norms and Continuity of the Forms}

Define the norms
\begin{equation}
\| \bfv \|_{a_h} := a_h(\bfv,\bfv)^{1/2}, \quad \| q \|_\Omega := \| q \|_{L^2(\Omega)}
\end{equation}
and 
\begin{align}
\tn (\bfv,q) \tn_h^2 = \| \bfv \|^2_{a_h} + \| q \|^2_\Omega + h^2 \| \nabla q \|^2_\Omega
\end{align}
We let  $a \lesssim b $ denote $a \leq C b$ with $C$ a positive constant independent of the mesh parameter.  Then we have 
the following continuities of the forms $b$ and $A_h$.

\begin{lem} There are constants such that for all functions in $(W + W_h) \times Q$,
\begin{align}\label{eq:contb}
\boxed{ b(\bfv,q) \lesssim \| \nabla \bfv \|_{\Omega} \Big( \| q \|_\Omega + h \| \nabla q \|_\Omega \Big) }
\end{align}
and 
\begin{align}\label{eq:contAh}
\boxed{ A_h((\bfv,q),(\bfw,r)) \lesssim \tn (\bfv,q) \tn_h \tn (\bfw,r) \tn_h }
\end{align}
\end{lem}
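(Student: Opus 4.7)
The plan is to prove the bound (\ref{eq:contb}) on $b$ first and then deduce (\ref{eq:contAh}) by splitting $A_h$ and using Cauchy--Schwarz on the triple norm.

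For (\ref{eq:contb}), I would start from the elementwise form
\begin{equation*}
b(\bfv,q) = \sum_{T\in\mcT_h} (\nabla\cdot\bfv,q)_T - \sum_{T\in\mcT_h} (\bfn\cdot\bfv,q)_{\partial T}.
\end{equation*}
The volume term is bounded directly by Cauchy--Schwarz as $\|\nabla\cdot\bfv\|_\Omega\|q\|_\Omega \lesssim \|\nabla\bfv\|_\Omega\|q\|_\Omega$. Since $q$ is continuous, the boundary sum regroups face by face as $\sum_F(\bfn_F\cdot[\bfv]_F,q)_F$, where $[\bfv]_F$ denotes the usual jump on an interior face and the trace on a boundary face.

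The crucial step is to show that $\int_F[\bfv]_F = 0$ on every face $F$. This follows from the observation that the trace of any function in $V(\hat{T})$ on a face lies in $P_2$, together with the fact that the three-point edge-midpoint quadrature rule is exact on $P_2$ over a triangle, so
\begin{equation*}
\int_F \bfv|_T = \tfrac{|F|}{3}\sum_{i=1}^{3}\bfv(\bfx_{E_i}),
\end{equation*}
with $\bfx_{E_i}$ the three edge midpoints lying on $F$. The values at these midpoints agree across interior faces by the definition of $W_h$, vanish on boundary faces by the homogeneous boundary condition, and the conforming part $W$ produces no jump and no boundary trace. Using this vanishing integral I would replace $q$ by $q - \bar q_F$ (the face mean) in the boundary term and apply Cauchy--Schwarz face by face. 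The trace-Poincar\'e estimate $\|q - \bar q_F\|_F \lesssim h^{1/2}\|\nabla q\|_{T^\pm}$ together with the standard nonconforming jump inequality $\sum_F h_F^{-1}\|[\bfv]_F\|_F^2 \lesssim \|\nabla\bfv\|_\Omega^2$ (which holds for $W_h$ precisely because of midpoint continuity) then yields, after a final Cauchy--Schwarz across faces, the remaining $h\|\nabla\bfv\|_\Omega\|\nabla q\|_\Omega$ contribution.

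For (\ref{eq:contAh}) I would split $A_h$ into its three constituent forms, bound $|a_h(\bfv,\bfw)| \le \|\bfv\|_{a_h}\|\bfw\|_{a_h}$ trivially by Cauchy--Schwarz, and apply (\ref{eq:contb}) to each $b$-term. Since $\|\bfv\|_{a_h}$, $\|q\|_\Omega$ and $h\|\nabla q\|_\Omega$ are precisely the three components of $\tn(\bfv,q)\tn_h$, each of the three resulting products is dominated by $\tn(\bfv,q)\tn_h\,\tn(\bfw,r)\tn_h$ by Cauchy--Schwarz between these components. The main obstacle I expect is the zero-mean identity for the jump: it is not a generic continuity fact but relies specifically on the edge-midpoint quadrature being exact on $P_2$ combined with the precise set of degrees of freedom of the rotated $Q_1$ element. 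Once that identity is in place, the remaining pieces are routine trace-Poincar\'e and nonconforming jump estimates.
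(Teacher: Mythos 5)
Your proposal is correct and follows essentially the same route as the paper: the key step in both is that midpoint continuity of $W_h$ together with the exactness of the edge--midpoint quadrature rule on $P_2$ forces the face integral of the normal jump to vanish, which permits subtracting the face average of $q$ and gaining the factor $h$ via trace/Poincar\'e estimates. The only cosmetic difference is that you package the velocity-side estimate as the standard nonconforming jump inequality $\sum_F h_F^{-1}\|[\bfv]\|_F^2 \lesssim \|\nabla\bfv\|_\Omega^2$, whereas the paper applies $(I-P_{0,F})$ to both factors and estimates them directly.
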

\begin{proof} Define $[ \bfn \cdot \bfv ] = \bfn_1 \cdot \bfv_1 +  \bfn_2 \cdot \bfv_2$ for an interior face shared by elements $T_1$ and $T_2$ and $[ \bfn \cdot \bfv ] = \bfn \cdot \bfv$ for a face at the boundary belonging to element $T$.  Noting that for a face $F$ we then have 
$ [ \bfn \cdot \bfv(\bfx_E) ] =0$ for the midpoint $\bfx_E$ of the edge $E$.  Therefore $([\bfn \cdot \bfv], 1)_F = 0$,  since the quadrature
 formula based on the midpoints of the edges on a triangle is exact for quadratic polynomials.  We therefore have 
$([\bfn \cdot \bfv], q)_F = ([\bfn \cdot \bfv], (I - P_{0,F}) q)_F$, where $P_{0,F}$ is the $L^2$ projection on constants on the face $F$. With these 
preparations at hand we obtain the following bound
\begin{align}
\sum_{T \in \mcT_h}  (\nabla \cdot\bfv,q)_T 
- (\bfn \cdot \bfv,  q)_{\partial T} 
&=
\sum_{T \in \mcT_h}  (\nabla \cdot\bfv,q)_T 
- \sum_{F\in \mcF_h} ((I-P_{0,F}) [\bfn \cdot \bfv],  (I-P_{0,F}) q)_{F} 
\\
&\leq 
\sum_{T \in \mcT_h}  \|\nabla \cdot\bfv\|_T \|q\|_T 
+  \sum_{F\in \mcF_h} \| (I-P_{0,F}) [\bfn \cdot \bfv]\|_F \|( I - P_{0,F})  q\|_{F} 
\\
&\leq 
\sum_{T \in \mcT_h}  \|\nabla \cdot\bfv\|_T \|q\|_T 
+  \sum_{F\in \mcF_h} h \| \nabla_F [\bfn \cdot \bfv]\|_F \|\nabla_F q\|_{F} 
\\
&\leq 
\sum_{T \in \mcT_h}  \|\nabla \cdot\bfv\|_T \|q\|_T 
+  \| \nabla \bfv \|_T h \|\nabla q\|_T 
\end{align}
which proves (\ref{eq:contb}). Finally,  (\ref{eq:contAh}) follows directly from  (\ref{eq:contb}) and the Cauchy-Schwarz inequality.
\end{proof}

\begin{rem} For $\tb$ defined in (\ref{eq:btilde}) we directly have 
\begin{align}
\tb(\bfv,  q) \leq \| \nabla \cdot \bfv \|_\Omega \| q \|_\Omega
\end{align}
since there are no trace terms on the boundary of the elements.
\end{rem}

%
%
%
%

\subsection{Interpolation}

We shall now define interpolants for the finite element space.   Starting with the pressure space we let 
 $\pi_{h,p}:L^2(\Omega) \rightarrow Q_h$ be a Clement interpolant.  We then have the standard estimate 
\begin{equation}\label{eq:interpol-pressure}
\|q -\pi_{h,p} q\|_{H^m(\Omega)} \lesssim h^{k-m} \| q\|_{H^k(\Omega)}, \qquad 0\leq m \leq k \leq 2
\end{equation}
To construct an interpolant for the velocity space we use component-wise  Scott-Zhang interpolation to 
satisfy the Dirichlet boundary conditions (in the nodes), 
\begin{equation}
\pi_{h,u} \bfv = [\pi_{h,SZ} v_i]_{i=1}^3 
\end{equation}
Here we also have the interpolation estimate 
\begin{equation}\label{eq:interpol-velocity}
\|\bfv -\pi_{h,u} \bfv \|_{H^m(\Omega)} \lesssim h^{k-m} \| \bfv \|_{H^k(\Omega)}, \qquad 0\leq m \leq k \leq 2
\end{equation}
From here on we simplify the notation and write $\pi_{h,p} = \pi_h$ and $\pi_{h,u} = \pi_h$ and interpret the operator
 in the correct way depending on in which space the argument reside.  Combining the estimates we get 
\begin{align}\label{eq:interpol}
\tn (\bfv,q) - (\pi_h \bfv,  \pi_h q) \tn_h \lesssim h( \| \bfv \|_{H^2(\Omega)} + \| q \|_{H^1(\Omega)} ) 
\end{align}

\subsection{Stability Analysis}

We first recall the following standard result from \cite{BrFo91}.
\begin{thm} If there is a constant such that 
\begin{equation}\label{eq:infsupsmall}
 \boxed{ \| q \|_\Omega + h \| \nabla q \|_\Omega  \lesssim \sup_{\bfv \in W_h} \frac{b(q,\bfv)}{\|\bfv \|_{a_h}} }
\end{equation}
Then there is a constant such that 
\begin{equation}\label{eq:infsupbig}
\boxed{ \tn (\bfv,q )\tn_h \lesssim \sup_{(\bfv,q) \in W_h \times Q_h} \frac{A_h((\bfv,q), (\bfw,r))}{\tn (\bfw,r)\tn_h} }
\end{equation}
\end{thm}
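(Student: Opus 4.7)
The plan is the standard saddle-point trick for mixed problems (cf.\ Brezzi--Fortin), adapted to the discrete setting: combine coercivity of $a_h$ on the velocity diagonal with the small inf-sup hypothesis (\ref{eq:infsupsmall}) through a perturbation argument, then read off the bigger inf-sup by bounding the test-function norm from above.

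More concretely, fix $(\bfv,q) \in W_h \times Q_h$. I would first invoke (\ref{eq:infsupsmall}) to produce a velocity witness $\bfv_q \in W_h$ for $q$, scaled so that
\[
\| \bfv_q \|_{a_h} \lesssim \| q \|_\Omega + h \| \nabla q \|_\Omega,
\qquad
b(\bfv_q, q) \gtrsim \bigl( \| q \|_\Omega + h \| \nabla q \|_\Omega \bigr)^2.
\]
Then I would test $A_h$ against $(\bfw,r) := (\bfv - \delta \bfv_q,\, q)$ for a small parameter $\delta > 0$ to be fixed. Using the skew-symmetric structure of $A_h$ (the off-diagonal $b$-terms from $(\bfv,q)$ tested against $(\bfv,q)$ cancel, so only the perturbation contributes pressure information), the computation reduces to
\[
A_h\bigl((\bfv,q),(\bfw,r)\bigr) \;=\; \| \bfv \|_{a_h}^2 \;-\; \delta\, a_h(\bfv,\bfv_q) \;+\; \delta\, b(\bfv_q,q).
\]

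Applying Cauchy--Schwarz and Young's inequality to the cross term $\delta\, a_h(\bfv,\bfv_q)$, and using the two estimates on $\bfv_q$, yields
\[
A_h\bigl((\bfv,q),(\bfw,r)\bigr) \;\gtrsim\; (1 - C\delta)\| \bfv \|_{a_h}^2 \;+\; \delta\, c\,\bigl( \| q \|_\Omega + h \| \nabla q \|_\Omega \bigr)^2,
\]
and choosing $\delta$ sufficiently small (independently of $h$) gives a lower bound of order $\tn(\bfv,q)\tn_h^2$. Meanwhile the triangle inequality, together with $\|\bfv_q\|_{a_h} \lesssim \|q\|_\Omega + h\|\nabla q\|_\Omega$, provides the matching upper bound $\tn(\bfw,r)\tn_h \lesssim \tn(\bfv,q)\tn_h$. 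Dividing and taking the supremum over $(\bfw,r) \in W_h \times Q_h$ delivers (\ref{eq:infsupbig}).

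I do not expect a real obstacle here: the argument is generic for mixed saddle-point systems and uses nothing about the rotated $Q_1$ space beyond what is packaged in the hypothesis (\ref{eq:infsupsmall}) and the continuity estimate (\ref{eq:contAh}). The only care needed is bookkeeping of signs in $A_h$ so that the $b$-terms from the diagonal test cancel while the perturbation reinforces $b(\bfv_q,q)$, and choosing $\delta$ small enough to absorb the cross term. The genuine difficulty of the paper lies in proving the small inf-sup (\ref{eq:infsupsmall}) itself, which this theorem simply upgrades.
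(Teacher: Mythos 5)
The paper offers no proof of this theorem at all---it simply states it as ``a standard result from \cite{BrFo91}''---and your perturbation argument (test against $(\bfv \pm \delta \bfv_q, q)$, cancel the off-diagonal $b$-terms, absorb the cross term by Young's inequality, bound $\tn(\bfw,r)\tn_h \lesssim \tn(\bfv,q)\tn_h$) is precisely the standard argument that citation stands in for, so you are taking essentially the same route. The one slip is the sign in your displayed identity: with the witness normalized so that $b(\bfv_q,q)\gtrsim(\|q\|_\Omega+h\|\nabla q\|_\Omega)^2$ and $\bfw=\bfv-\delta\bfv_q$, the last term comes out as $-\delta\, b(\bfv_q,q)$, so you must instead take $\bfw=\bfv+\delta\bfv_q$ (or flip the sign of $\bfv_q$) --- a bookkeeping point you already flagged and which does not affect the conclusion.
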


We shall now prove that (\ref{eq:infsupsmall}) holds for the nonconforming space $W_h \times Q_h$ using an approach 
called Verf\"urth's trick \cite{Ver84}, which proceeds in two steps.

\begin{lem}[{\bf Step 1}] There are constants $c_1$ and $c_2$ such that 
\begin{equation}\label{eq:verfurth-a}
\boxed{ \sup_{\bfv \in W_h} \frac{b(\bfv,q)}{\|\bfv \|_{a_h}}  \geq c_1 \| q \| - c_2 h \| \nabla q \| }
\end{equation}
\end{lem}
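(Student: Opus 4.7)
The plan is to follow Verf\"urth's trick: lift $q$ by a conforming velocity via the continuous inf-sup condition, then replace that velocity by its non-conforming interpolant and control the perturbation through the alternative representation of $b$.

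Since $q \in Q_h \subset Q$ has mean zero, the continuous inf-sup condition on $W \times Q$ yields a $\bfv^\star \in W$ with $\nabla \cdot \bfv^\star = q$ and $\|\bfv^\star\|_{H^1(\Omega)} \lesssim \|q\|_\Omega$. Let $\bfw_h := \pi_h \bfv^\star \in W_h$ be its Scott--Zhang interpolant, and split
\begin{align*}
b(\bfw_h, q) = b(\bfv^\star, q) + b(\bfw_h - \bfv^\star, q).
\end{align*}
The first term evaluates cleanly: since $\bfv^\star$ is continuous on $\Omega$ and vanishes on $\partial\Omega$, the face trace corrections in the definition of $b$ cancel across interior faces and vanish on boundary faces, so $b(\bfv^\star, q) = (\nabla \cdot \bfv^\star, q)_\Omega = \|q\|_\Omega^2$. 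For the error term I use the alternative identity $b(\bfw, q) = \sum_{T \in \mcT_h} (\bfw, \nabla q)_T$ (valid for any $\bfw \in W + W_h$ since $\nabla q$ is piecewise constant on the mesh), so that Cauchy--Schwarz combined with (\ref{eq:interpol-velocity}) at $m=0$, $k=1$ gives
\begin{align*}
|b(\bfw_h - \bfv^\star, q)| \leq \|\bfw_h - \bfv^\star\|_\Omega \|\nabla q\|_\Omega
\lesssim h \|\bfv^\star\|_{H^1(\Omega)} \|\nabla q\|_\Omega
\lesssim h \|q\|_\Omega \|\nabla q\|_\Omega.
\end{align*}

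The $H^1$-stability of $\pi_h$ (triangle inequality plus the $m = k = 1$ case of (\ref{eq:interpol-velocity}), read element-wise) gives $\|\bfw_h\|_{a_h} \lesssim \|\bfv^\star\|_{H^1(\Omega)} \lesssim \|q\|_\Omega$, so taking the ratio yields
\begin{align*}
\sup_{\bfv \in W_h} \frac{b(\bfv, q)}{\|\bfv\|_{a_h}}
\geq \frac{b(\bfw_h, q)}{\|\bfw_h\|_{a_h}}
\geq \frac{\|q\|_\Omega^2 - C_1 h \|q\|_\Omega \|\nabla q\|_\Omega}{C_2 \|q\|_\Omega}
= c_1 \|q\|_\Omega - c_2 h \|\nabla q\|_\Omega,
\end{align*}
which is the stated bound.

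The only delicate point is the non-conformity of $\bfw_h$ inside $b(\cdot, q)$: a naive treatment of the face trace contributions would produce jump integrals requiring a reprise of the midpoint-quadrature cancellation used in the continuity lemma. The reformulation $b(\bfw, q) = \sum_T (\bfw, \nabla q)_T$, which is precisely why $b$ was defined with the boundary correction, sweeps this issue aside and reduces the entire estimate to a textbook Scott--Zhang approximation bound. Everything else -- existence of the lifting $\bfv^\star$, stability and approximation of $\pi_h$ -- is quoted off the shelf.
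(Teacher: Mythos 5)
Your proof is correct and follows essentially the same route as the paper: take a conforming velocity from the continuous inf--sup condition, replace it by its Scott--Zhang interpolant, exploit the identity $b(\bfw,q)=\pm\sum_{T}(\bfw,\nabla q)_T$ (which holds for nonconforming $\bfw$ precisely because of the boundary correction in $b$) to reduce the perturbation to an $L^2$ interpolation estimate of order $h\|\nabla q\|_\Omega$, and finish with the $H^1$-stability of the interpolant. The only cosmetic difference is that you invoke the exact divergence lifting $\nabla\cdot\bfv^\star=q$ where the paper uses the inf--sup inequality directly; both are standard and interchangeable here.
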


\begin{proof} For each $q \in Q_h$ there exists a $\bfv\in W$ such that 
\begin{align}
c_1 \| q \|_\Omega \leq \frac{(\nabla \cdot \bfv,q)_\Omega}{\| \nabla \bfv \|_\Omega} 
\end{align}
We shall now replace $\bfv \in W$ by the interpolant $\pi_h \bfv$ and estimate the remainder term as follows
\begin{align}
-\frac{( \pi_h \bfv,\nabla q)_\Omega}{\| \nabla \pi_h \bfv \|_\Omega} 
&=- \frac{(\bfv,  \nabla q)_\Omega}{\| \nabla \pi_h \bfv \|_\Omega}  +  \frac{(\bfv - \pi_h \bfv,  \nabla q)_\Omega}{\| \nabla \pi_h \bfv \|_\Omega}  
\\
&=  \frac{(\nabla \cdot \bfv,  \nabla q)_\Omega}{\| \nabla \bfv \|_\Omega}  \frac{\| \nabla \bfv \|_\Omega}{\| \nabla \pi_h \bfv\|_\Omega} 
+  \frac{(\bfv - \pi_h \bfv,  \nabla q)_\Omega}{\| \nabla \bfv \|_\Omega}  \frac{\| \nabla \bfv \|_\Omega}{\| \nabla \pi_h \bfv\|_\Omega} 
\\
&\geq  c_1 \| q \|_\Omega - 
 \frac{\|\bfv - \pi_h \bfv\|_\Omega \|\nabla q\|_\Omega}{\| \nabla \bfv \|_\Omega}  \frac{\| \nabla \bfv \|_\Omega}{\| \nabla \pi_h \bfv\|_\Omega} 
\\
&\geq c_1 \| q \|_\Omega - c_2 h \| \nabla q \|_\Omega 
\end{align}
Here we used partial integration together with the continuity of $q$ and the boundary condition $\bfv = 0$ on $\partial \Omega$,  the interpolation estimate (\ref{eq:interpol-velocity}) and the boundedness $\| \nabla \pi_h \bfv \|_\Omega \lesssim \| \nabla \bfv \|_\Omega$ of the interpolation operator.  
\end{proof}

\begin{lem}[{\bf Step 2}]\label{lem:step2} Under the assumption (\ref{eq:assump}) there exists $c_3>0$ such that
\begin{equation}\label{eq:step2}
\boxed{ \sup_{\bfv\in W_h}\frac{b(\bfv,q)}{\| \bfv\|_{a_h}}\geq c_3h\,\| \nabla q\| }
\end{equation}
\end{lem}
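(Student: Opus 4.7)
This is the second half of Verf\"urth's trick, where we must use $\nabla q$ rather than $q$ itself to produce the test function. The form $b$ was set up so that element-wise integration by parts eliminates all face contributions, giving
\[
b(\bfv,q) \;=\; \sum_{T \in \mcT_h} (\bfv,\nabla q)_T \;=\; \sum_{T \in \mcT_h} \bfg_T \cdot \int_T \bfv,
\]
where $\bfg_T := \nabla q|_T$ is constant on each element (since $q$ is piecewise linear). The task is therefore to construct $\bfv \in W_h$ whose element-wise moment $\int_T \bfv$ is sufficiently well aligned with $\bfg_T$, with $\|\bfv\|_{a_h}$ of order $h^{-1}\|\bfg\|_\Omega$.

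\textbf{Reference identities.} On $\hat T$, the six shape functions $\varphi_1,\dots,\varphi_6$ are permuted by the tetrahedral symmetry group, and since $\sum_i \varphi_i \equiv 1$ (this follows from $N(\sum_i\varphi_i) = [1,\dots,1] = N(1)$), each $\int_{\hat T}\varphi_i = |\hat T|/6$. Affine equivalence yields $\int_T \varphi_E^T = |T|/6$ for every edge of every $T$. Hence if $\bfv \in W_h$ is prescribed by nodal values $\boldsymbol{\alpha}_E := \bfv(\bfx_E) \in \IR^3$ (set to $\mathbf 0$ on boundary edges), then
\[
\int_T \bfv \;=\; \frac{|T|}{6}\sum_{E \subset \partial T,\,\mathrm{int}}\boldsymbol{\alpha}_E,
\]
and assumption (\ref{eq:assump}) provides at least three interior edges per $T$, i.e., three $\IR^3$-valued degrees of freedom per element to adjust this local moment.

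\textbf{Construction and bounds.} Via a Fortin-type patch-based choice of $\{\boldsymbol{\alpha}_E\}$, I would arrange that on every $T$
\[
\bfg_T \cdot \sum_{E\subset \partial T,\,\mathrm{int}}\boldsymbol{\alpha}_E \;\gtrsim\; |\bfg_T|^2,
\qquad |\boldsymbol{\alpha}_E| \;\lesssim\; \max_{T\ni E}|\bfg_T|.
\]
The reference-element inverse estimate then gives $\|\nabla\bfv\|_T \lesssim h^{-1}\|\bfv\|_T \lesssim h^{-1}|\bfg_T||T|^{1/2}$, so $\|\bfv\|_{a_h} \lesssim h^{-1}\|\bfg\|_\Omega$, while $b(\bfv,q)\gtrsim \|\bfg\|_\Omega^2$; combined these yield $b(\bfv,q)/\|\bfv\|_{a_h} \gtrsim h\|\nabla q\|_\Omega$, as required.

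\textbf{Main obstacle.} The delicate step is the global coupling in this Fortin construction: each midpoint value $\boldsymbol{\alpha}_E$ is shared by every tetrahedron containing the edge $E$, so naive choices such as $\boldsymbol{\alpha}_E = \sum_{T\ni E}\bfg_T$ can lose the positive lower bound when neighboring gradients are opposed (the off-diagonal contributions in $\sum_E |\sum_{T\ni E}\bfg_T|^2$ easily dominate the diagonal). It is exactly here that (\ref{eq:assump}) enters: the hypothesis of at least three interior edges per element ensures that the element-to-interior-edge incidence map has enough local richness for the moment map $\boldsymbol{\alpha} \mapsto \big(\sum_{E\subset\partial T}\boldsymbol{\alpha}_E\big)_T$ to admit a bounded right-inverse at the patch level, which is what makes the construction succeed and what pins down the constant $c_3$.
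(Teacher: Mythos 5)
Your setup is sound: the reduction of $b(\bfv,q)$ to $\sum_T \bfg_T\cdot\int_T\bfv$ with $\bfg_T=\nabla q|_T$ constant, the identity $\int_T\varphi_E=|T|/6$, and the scaling bookkeeping ($\|\bfv\|_{a_h}\lesssim h^{-1}\|\nabla q\|_\Omega$ for $O(1)$ nodal values, hence the factor $h$ in the final estimate) all match what is needed. But the proof is not complete: the entire content of the lemma sits in the sentence ``Via a Fortin-type patch-based choice of $\{\boldsymbol{\alpha}_E\}$, I would arrange that $\bfg_T\cdot\sum_E\boldsymbol{\alpha}_E\gtrsim|\bfg_T|^2$,'' and you never exhibit such a choice. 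Worse, you correctly identify why this is delicate --- the value $\boldsymbol{\alpha}_E$ is shared by all tetrahedra containing $E$, and averaging the neighboring gradients can destroy positivity --- and then leave exactly that obstacle unresolved. Asserting that assumption (\ref{eq:assump}) gives the incidence map ``a bounded right-inverse at the patch level'' is a conjecture, not an argument; no patch construction is given and no reason is offered why opposing gradients in adjacent elements cannot defeat it.

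The paper's proof closes this gap with a choice that is local and sign-definite, so the cancellation problem never arises: take $\boldsymbol{\alpha}_E=-h^2(\bft_E\cdot\nabla q)\bft_E$ for each interior edge $E$, where $\bft_E$ is the unit tangent. This is well defined as a single nodal value precisely because $q\in Q_h$ is \emph{continuous}, so the tangential derivative $\bft_E\cdot\nabla q$ along $E$ agrees between all elements sharing that edge --- no averaging over $T\ni E$ is needed. With this choice every term in the element sum is a square,
\begin{equation*}
b(\bfv_*,q)=\sum_{T\in\mcT_h}\frac{|T|}{6}\sum_{E\in\mcE_h^I(T)}h^2\big(\bft_E\cdot\nabla q(\bfx_E)\big)^2\geq 0,
\end{equation*}
so the off-diagonal interference you feared simply cannot occur. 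Assumption (\ref{eq:assump}) is then used only to guarantee that each $T$ has three interior edges whose tangents are linearly independent (by shape regularity), which gives $\sum_{E\in\mcE_h^I(T)}(\bft_E\cdot\bfg_T)^2\gtrsim|\bfg_T|^2$ elementwise and hence $b(\bfv_*,q)\gtrsim h^2\|\nabla q\|_\Omega^2$; the inverse estimate $\|\nabla\varphi_E\|_T^2\lesssim h^{-2}\|\varphi_E\|_T^2$ gives $\|\bfv_*\|_{a_h}\lesssim h\|\nabla q\|_\Omega$, and the quotient yields (\ref{eq:step2}). Your interpretation of the role of (\ref{eq:assump}) as ensuring invertibility of an edge-incidence system is therefore off target: its role is purely to supply three independent tangent directions per element. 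To repair your proof you must either adopt this tangential-derivative choice or actually construct and verify the Fortin operator you invoke; as written the argument does not establish the lemma.
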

\begin{proof}  Using partial integration we have the identity 
\begin{align}\label{eq:infsup2-a}
b(\bfv, q) = \sum_{T\in \mcT_h} -(\bfv, \nabla q)_{\mcT_h}  
\end{align}
Observing that $\nabla p$ is element-wise constant we may apply the quadrature formula 
\begin{equation}\label{eq:quadrature}
\int_T w(\bfx) dT \approx \int_T \sum_{E \in \mcE_h(T)}   w(\bfx_{E})\varphi_{T,E}(\bfx_{E}) dT =\frac{\vert T\vert}{6}\sum_{E \in \mcE_h(T)}w(\bfx_{E})
\end{equation}
which is exact for $w \in V_h$,  to obtain 
\begin{align}
\sum_{T\in \mcT_h} -(\bfv, \nabla q)_{\mcT_h} &= \frac{|T|}{6} \sum_{T\in \mcT_h} \sum_{E \in \mcE_h(T)} - \bfv(\bfx_E ) \cdot \nabla p(\bfx_E) 
\end{align}
where $\{\varphi_E\}_{E \in \mcE_h}$ is the global basis in $V_h$,  $\mcE_h(T)$ is the set of edges belong to element $T$,  and $\varphi_{T,E} = \varphi_E|_T$.

Since $q$ is continuous it follows that the tangent derivative $\bft_E \cdot \nabla q$ along each edge $E$, with unit tangent vector $\bft_E$,  
is also continuous and thus taking 
\begin{equation}
\bfv_* = \sum_{E \in \mcE_h^I}  -  h^2 (\bft_E \cdot \nabla q) \bft_E  \varphi_E 
= \sum_{T \in \mcT_h}  \sum_{E \in \mcE^I_h(T)}  - h^2 (\bft_{E} \cdot \nabla q) \bft_{E}  \varphi_{T,E} 
\end{equation}
where $\mcE_h^I \subset \mcE_h$ is the set of interior edges,   we get 
\begin{align}
\sum_{T\in \mcT_h} -(\nabla q, \bfv)_{\mcT_h} &= 
\sum_{T \in \mcT_h} \frac{|T|}{6} \sum_{E \in \mcE_h^I(T)} h^2 (\bft_E \cdot \nabla q (\bfx_E))^2 
\gtrsim 
\sum_{T \in \mcT_h}  h^2 \| \nabla q \|^2_T
\end{align}
since $\nabla q$ is element-wise constant and there are, by assumption,  three linearly independent tangent vectors in the set 
$\{\bft_E \}_{E \in \mcE_h(T)}$ for each element $T\in \mcT_h$.   Finally,  noting that 
\begin{align}
\|\nabla v_*\|_T^2 \lesssim h^4 \sum_{E \in \mcE_h^I(T)} (\bft_E \cdot \nabla q (\bfx_E))^2 \|\nabla \varphi_{T,E}\|_T^2  
\lesssim h^4 \sum_{E \in \mcE^I_h(T)} |\nabla q (\bfx_E)|^2  h^{-2} h^3  
\lesssim h^2 \| \nabla q \|_T^2
\end{align} 
where we used an inverse bound to conclude that $\| \nabla \varphi_E \|_T^2 \lesssim h^{-2} \| \varphi_E \|_T^2 \lesssim h^{-2} h^3$,  and 
summing over $T$ gives
\begin{align}
\| v _* \|_\Omega \lesssim h \| \nabla q \|_\Omega
\end{align}
Combining the estimates we get the desired result since
\begin{align}
\frac{b(v_*,q)}{\| \nabla v_* \|_\Omega} \gtrsim \frac{h^2 \| \nabla q \|^2_\Omega}{\| \nabla v_* \|_\Omega} 
\gtrsim h \| \nabla q \|_\Omega  \frac{h \| \nabla q \|_\Omega}{\| \nabla v_* \|_\Omega} 
\gtrsim  h \| \nabla q \|_\Omega
\end{align}
\end{proof}

\begin{rem} For the alternative  form $\tb$ we get the more complicated expression 
\begin{align}
\tb(\bfv, q) = \sum_{T\in \mcT_h} -(\bfv, \nabla q)_{T}  + (\bfn\cdot \bfv, q)_{\partial T}
\end{align}
where the trace term on the boundary of $T$ does not vanish.  We will however show in  \ref{sec:appendix} that with the same 
choice of $\bfv_*$ the trace term can be shown to be dominated by the bulk term on each element.  The proof is based on mapping to the 
reference element and explicit computation of the two integrals. Thus Lemma \ref{lem:step2} also holds for the form $\tb$.
\end{rem}

\begin{lem} There is a constant such that the inf-sup condition (\ref{eq:infsupsmall}) holds.
\end{lem}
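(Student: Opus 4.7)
The plan is to combine the two bounds from Lemma~\ref{lem:step2} (Step 2) and its companion (Step 1) by a standard Verf\"urth-type convex combination argument. Since both bounds hold for the same supremum on the left-hand side, any convex combination of the right-hand sides is also a lower bound.

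Concretely, I would fix $\alpha \in (0,1)$ to be chosen later and write
\begin{equation*}
\sup_{\bfv\in W_h} \frac{b(\bfv,q)}{\|\bfv\|_{a_h}}
= \alpha \sup_{\bfv\in W_h} \frac{b(\bfv,q)}{\|\bfv\|_{a_h}} + (1-\alpha) \sup_{\bfv\in W_h} \frac{b(\bfv,q)}{\|\bfv\|_{a_h}}.
\end{equation*}
Applying (\ref{eq:verfurth-a}) to the first term and (\ref{eq:step2}) to the second, we obtain
\begin{equation*}
\sup_{\bfv\in W_h} \frac{b(\bfv,q)}{\|\bfv\|_{a_h}}
\geq \alpha c_1 \|q\|_\Omega + \bigl((1-\alpha) c_3 - \alpha c_2\bigr) h\|\nabla q\|_\Omega.
\end{equation*}

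The only thing to verify is that $\alpha$ can be chosen small enough that the coefficient of $h\|\nabla q\|_\Omega$ remains strictly positive, while keeping the coefficient of $\|q\|_\Omega$ strictly positive as well. Taking for instance $\alpha = c_3/(2(c_2+c_3))$ gives $(1-\alpha)c_3 - \alpha c_2 = c_3/2 > 0$ and $\alpha c_1 > 0$, so with $C = \min(\alpha c_1, c_3/2)/2 > 0$ we conclude
\begin{equation*}
\|q\|_\Omega + h\|\nabla q\|_\Omega \;\lesssim\; \sup_{\bfv\in W_h} \frac{b(\bfv,q)}{\|\bfv\|_{a_h}},
\end{equation*}
which is precisely (\ref{eq:infsupsmall}).

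There is essentially no obstacle here: once Steps 1 and 2 are in hand, this final lemma is a two-line bookkeeping argument about positive linear combinations. The real work was done in Lemma~\ref{lem:step2}, where assumption (\ref{eq:assump}) on three linearly independent interior edges per tetrahedron was used to dominate $h\|\nabla q\|_\Omega$ by the tangential-derivative sum on interior edges, and the identical argument carries over to $\tb$ by the reference-element computation deferred to the appendix.
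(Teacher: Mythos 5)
Your proof is correct and is essentially identical to the paper's: the paper multiplies (\ref{eq:verfurth-a}) by $c_3$ and (\ref{eq:step2}) by $2c_2$ and adds, which is the same positive linear combination you express as a convex combination with $\alpha = c_3/(2(c_2+c_3))$, and your check that the coefficient of $h\|\nabla q\|_\Omega$ stays positive (equal to $c_3/2$) is right. No gaps.
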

\begin{proof}
Multiply (\ref{eq:verfurth-a}) by $c_3$ and (\ref{eq:step2}) by $2c_2$ and add up to find
\begin{align}
(c_3+2 c_2)\sup_{\bfv \in W_h} \frac{b(q,\bfv)}{\|\bfv \|_{a_h}}  \geq c_3c_1 \| q \|_\Omega - c_3c_2 h \| \nabla q \|_\Omega 
+ 2 c_3c_2h\,\| \nabla q\|_\Omega 
= c_3c_1 \| q \|_\Omega + c_3c_2h\,\| \nabla q\|_\Omega 
\end{align}
Thus (\ref{eq:infsupsmall}) holds with the hidden constant $c_3 = \min(c_1,c_2) /(c_3+2 c_2)$.
\end{proof}

\subsection{Error Analysis}
\begin{thm} There is a constant such that 
\begin{align}
\boxed{ \tn (\bfu - \bfu_h, p - p_h ) \tn_h \lesssim h \Big( \| \bfu \|_{H^3(\Omega)}  + \| p \|_{H^1(\Omega)} \Big) }
\end{align}
\end{thm}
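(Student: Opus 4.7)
The plan is to use a standard nonconforming-Galerkin error analysis: split the error into an interpolation part and a discrete part, control the discrete part via the stability inf-sup (\ref{eq:infsupbig}), and then identify the consistency error that remains because the method is non-conforming.

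First, I would write $(\bfu-\bfu_h,p-p_h) = (\bfu - \pi_h\bfu,p - \pi_h p) + (\pi_h\bfu - \bfu_h,\pi_h p - p_h)$. The first summand is directly bounded by the combined interpolation estimate (\ref{eq:interpol}), giving a term of order $h(\|\bfu\|_{H^2(\Omega)}+\|p\|_{H^1(\Omega)})$. For the second summand, denote $(\bfE_u,E_p):=(\pi_h\bfu-\bfu_h,\pi_h p-p_h)\in W_h\times Q_h$. The full inf-sup inequality (\ref{eq:infsupbig}) yields
\begin{align*}
\tn(\bfE_u,E_p)\tn_h \lesssim \sup_{(\bfw,r)\in W_h\times Q_h}\frac{A_h((\bfE_u,E_p),(\bfw,r))}{\tn(\bfw,r)\tn_h}.
\end{align*}
Now I would add and subtract $(\bfu,p)$ in the first argument:
\begin{align*}
A_h((\bfE_u,E_p),(\bfw,r)) = A_h((\pi_h\bfu-\bfu,\pi_h p-p),(\bfw,r)) + A_h((\bfu-\bfu_h,p-p_h),(\bfw,r)).
\end{align*}
The first term on the right is handled by continuity (\ref{eq:contAh}) and (\ref{eq:interpol}), yielding the desired $h(\|\bfu\|_{H^2}+\|p\|_{H^1})\tn(\bfw,r)\tn_h$. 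The second term is the consistency error and constitutes the main obstacle of the proof.

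To analyze the consistency error, I would insert the strong form $-\Delta\bfu+\nabla p=\bff$ and integrate $a_h(\bfu,\bfw)$ by parts element by element. Since $A_h((\bfu_h,p_h),(\bfw,r))=(\bff,\bfw)$, and since $b(\bfu,r)=\sum_T(\bfu,\nabla r)_T$ vanishes by element-wise integration by parts together with the continuity of $r\in Q_h$, the incompressibility $\nabla\cdot\bfu=0$ and the Dirichlet condition $\bfu=0$ on $\partial\Omega$, only the jump of the normal derivative of $\bfu$ survives:
\begin{align*}
A_h((\bfu-\bfu_h,p-p_h),(\bfw,r)) = \sum_{F\in\mcF_h}([\bfn\cdot\nabla\bfu],[\bfw])_F,
\end{align*}
with the usual convention that $[\bfw]=\bfw$ on boundary faces (recalling that $\bfw$ vanishes at midpoints of boundary edges). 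The key exploitation of the rotated $Q_1$ space is then the same observation already used in the continuity lemma: since $[\bfw]$ is face-wise quadratic and vanishes at the three edge midpoints of $F$, the edge-midpoint quadrature (exact for quadratics) gives $([\bfw],1)_F=0$, so we may subtract the face-constant projection $P_{0,F}(\bfn\cdot\nabla\bfu)$ and obtain
\begin{align*}
\sum_F ([\bfn\cdot\nabla\bfu],[\bfw])_F = \sum_F ((I-P_{0,F})[\bfn\cdot\nabla\bfu],(I-P_{0,F})[\bfw])_F.
\end{align*}

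Finally, I would bound each factor by scaling. A standard approximation estimate gives $\|(I-P_{0,F})[\bfn\cdot\nabla\bfu]\|_F\lesssim h^{1/2}\|\bfu\|_{H^2(\omega_F)}$ (or one more $h^{1/2}$ if one needs the extra regularity coming from $H^3$ through a trace of $D^2\bfu$), while trace/inverse inequalities on the rotated $Q_1$ space yield $\|(I-P_{0,F})[\bfw]\|_F\lesssim h^{1/2}\|\nabla\bfw\|_{\omega_F}$. Summing over faces via Cauchy--Schwarz produces a bound of order $h\|\bfu\|_{H^3(\Omega)}\,\|\nabla\bfw\|_{\Omega}\le h\|\bfu\|_{H^3(\Omega)}\tn(\bfw,r)\tn_h$. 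Combining this with the interpolation contribution and the inf-sup inequality, then dividing through, yields the asserted estimate. The hard part is clearly the consistency step: ensuring that the midpoint-orthogonality of the jumps really buys the extra $h$, and keeping careful track of the boundary faces where $\bfw$ does not identically vanish but only vanishes at edge midpoints.
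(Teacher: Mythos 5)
Your proposal is correct and follows essentially the same route as the paper: the same splitting into interpolation and discrete parts, the same use of the inf--sup condition (\ref{eq:infsupbig}) and continuity (\ref{eq:contAh}), and the same treatment of the consistency error by element-wise integration by parts, subtraction of the face-wise constant projection $P_{0,F}$ justified by the exactness of edge-midpoint quadrature for quadratics, and scaling estimates. No substantive differences to report.
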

\begin{proof} We first split the error in an interpolation error part  and a discrete part
\begin{align}
\tn (\bfu,p) - (\bfu_h,p_h)\tn_h \leq \tn (\bfu,p) - (\pi_h \bfu,\pi_h p)\tn_h + \tn (\pi_h \bfu,\pi_h p) - ( \bfu_,p_h)\tn_h
\end{align}
The first term can be estimated using the interpolation error estimate (\ref{eq:interpol}). To estimate the discrete part 
of the error we employ the inf-sup condition (\ref{eq:infsupbig}) to obtain
\begin{align}
&\tn (\pi_h \bfu - \bfu_h,  \pi_h p - p_h ) \tn_h 
\lesssim \sup_{(\bfv,q) \in W_h \times Q_h} \frac{ A_h( ( \pi_h \bfu - \bfu_h,  \pi_h p - p_h, (\bfv,q) )  }{ \tn (\bfv,  q) \tn_h}
\\
& \lesssim 
\sup_{(\bfv,q) \in W_h \times Q_h} \frac{ A_h( (\pi_h \bfu - \bfu,  \pi_h p - p ), (\bfv,q) ) }{ \tn (\bfv,  q) \tn_h} 
+ 
\sup_{(\bfv,q) \in W_h \times Q_h} \frac{ A_h( \bfu, p), (\bfv,q) ) - (\bff,\bfv)_\Omega  }{ \tn (\bfv,  q) \tn_h}
\\
& \lesssim \tn (\pi_h \bfu - \bfu,  \pi_h p - p ) \tn_h +
\sup_{(\bfv,q) \in W_h \times Q_h} \frac{ A_h( \bfu, p), (\bfv,q) ) - (\bff,\bfv)_\Omega  }{ \tn (\bfv,  q) \tn_h}
\end{align}
Here we used continuity (\ref{eq:contAh}) of the form $A_h$ for the first term which can now be estimated using the 
interpolation error estimate (\ref{eq:interpol}).  The second term accounts for the consistency error and using partial 
integration we find that
\begin{align}
&A_h( \bfu, p), (\bfv,q) ) - (\bff,\bfv)_\Omega  = a_h(\bfu,\bfv) - b(\bfv,p) + b(\bfu,q)  - (\bff,\bfv)_\Omega
\\
&= \sum_{T \in \mcT_h} (\nabla \bfu, \nabla \bfv)_T + (\bfv,\nabla p )_T - (\bfu, \nabla q)_T - (\bff,\bfv)_T
\\
&= \sum_{T \in \mcT_h} - (\Delta \bfu,  \bfv)_T + ( \nabla_n \bfu, \bfv)_{\partial T} + (\bfv,\nabla p )_T + (\nabla \cdot \bfu, q)_T 
- (\bfn \cdot \bfu, q)_{\partial T} - (\bff,\bfv)_T
\\
&= \sum_{F \in \mcF_h}  (\bfu \otimes \nabla, \bfv\otimes \bfn)_{\partial T} 
\\
&= \sum_{F \in \mcF_h}  ( \bfu \otimes \nabla , [\bfv\otimes \bfn])_{F} 
\end{align}
where $[\bfv \otimes \bfn] = \bfv_1 \otimes \bfn_1 +  \bfv_2\otimes \bfn_2$ for a face $F$ shared by elements 
$T_1$ and $T_2$ with $\bfv|_{T_i} = \bfv_i$, $i=1,2.$ and $[\bfv \otimes \bfn] = \bfv \otimes \bfn$ for a face at the boundary.
Then using the fact that $[\bfv] = \bfzero$ in the midpoints of the edges and that midpoint quadrature is exact for quadratic polynomials 
on  a triangle it follows that $( 1, ([\bfv\otimes \bfn])_{ij})_F =0$ and therefore we may subtract 
the $L^2$-projection on constant functions on the faces and then estimate the contributions using the following standard bounds
\begin{align}\label{eq:apriori-dd}
\sum_{F \in \mcF_h}  (  \bfu\otimes \nabla , [\bfv \otimes \bfn])_{F}  
&= \sum_{F \in \mcF_h}  ((I-P_{0,F}) \bfv \otimes \nabla, (I-P_{0,F}) [\bfv\otimes \bfn ] )_{F} 
\\
&\leq 
\sum_{F \in \mcF_h}  \|(I-P_{0,F}) \bfu \otimes \nabla \|_F \|(I-P_{0,F}) [\bfv\otimes \bfn] \|_F
\\
&\label{eq:apriori-ee}
  \lesssim
\sum_{F \in \mcF_h}  h \| \nabla_F  (\bfu \otimes \nabla ) \|_F h \| \nabla_F [ \bfv\otimes \bfn ]\|_F
\\
& \lesssim
\sum_{T \in \mcT_h}  h \| \bfu\|_{H^3(T)}   \| \nabla \bfv \|_T
\end{align}
where $\nabla_F = (I - \bfn_F \otimes \bfn) \nabla$,  with $\bfn_F$ a unit normal to the $F$,  is the tangential gradient to the face $F$.
Combing the bounds gives the desired estimate.
\end{proof}

\begin{rem} For the form $\tb$ we get a consistency error  of the form
\begin{align}
&A_h( \bfu, p), (\bfv,q) ) - (\bff,\bfv)_\Omega  = a_h(\bfu,\bfv) - b(\bfv,p) + b(\bfu,q)  - (\bff,\bfv)_\Omega
\\
&= \sum_{T \in \mcT_h} (\nabla \bfu, \nabla \bfv)_T - (\nabla \cdot \bfv, p )_T + (\nabla \cdot \bfu,  q)_T - (\bff,\bfv)_T
\\
&= \sum_{T \in \mcT_h} - (\Delta \bfu,  \bfv)_T + ( \nabla_n \bfu, \bfv)_{\partial T} 
+ (\bfv,\nabla p )_T  - (\bfn \cdot \bfv, p)_{\partial T} - (\bff,\bfv)_T+ (\nabla \cdot \bfu, q)_T
\\
&= \sum_{F \in \mcF_h}  ( \bfu\otimes \nabla , \bfv\otimes \bfn)_{\partial T}   - (\bfn \cdot \bfv, p)_{\partial T}
\\
&= \sum_{F \in \mcF_h}  ( \bfu\otimes \nabla ,  [\bfv\otimes \bfn ])_{F}  - ([\bfn \cdot \bfv], p)_{\partial T}
\end{align}

which,  using the same approach as in estimates (\ref{eq:apriori-dd}-\ref{eq:apriori-ee}),  can be estimated by
\begin{align}
 \sum_{F \in \mcF_h}  ( \bfu\otimes \nabla , [\bfv\otimes \bfn])_{F}  - ([\bfn \cdot \bfv], p)_{\partial T} 
& \lesssim \sum_{T \in \mcT_h}  h \| \bfu\|_{H^3(T)}   \| \nabla \bfv \|_T + h \| \bfv \|_{H^1(T)} \| p \|_{H^1(T)}
 \\
 & \lesssim \sum_{T \in \mcT_h} h (  \| \bfu\|_{H^3(T)}  + \| p \|_{H^1(T)} )  \| \bfv \|_{H^1(T)} 
\end{align}
\end{rem} 

%
%
%
%

\section{NUMERICAL EXAMPLES}\label{numex}

\subsection{Convergence}

We consider a problem in the ball with radius 1 and with center at the origin. A fabricated solution is given by
\begin{equation}
\bfu = (x_2^3 - x_3^3,x_1^3 - x_3^3,-x_1^3 - x_2^3), \quad p = 6 (x_1 x_2 - x_1x_3 -x_2x_3)
\end{equation}
with $\bff={\bf 0}$.
The exact solution is used as Dirichlet data and zero mean pressure is imposed by a Lagrange multiplier. We compare the convergence of the pressure inconsistent method, using $\tilde{b}(\bfu,q)$, to
the pressure consistent method, using $b(\bfu,q)$, in Fig. \ref{fig:convergence}. The convergence is shown in $L_2$ for the pressure and the velocity and in broken $H^1$ semi--norm for the velocity.
We note that the methods converge at the same rate, albeit with a slightly larger error constant for the inconsistent method.
The observed rates from Fig. \ref{fig:convergence} are 
\begin{align}
\Vert\bfu-\bfu_h\Vert_{L_2(\Omega)} = {}& O(h^2)\\
\Vert\bfu-\bfu_h\Vert_{a_h} ={}& O(h)\\
\Vert p-p_h\Vert_{L_2(\Omega)} \approx {}&  O(h^{3/2})
\end{align} 
As we do not have full $P^2$ approximation for the velocity, the observed rate of convergence of pressure is better than expected. 

In Fig. \ref{fig:pressure} we compare the pressure solutions to the interpolated pressure on the boundary, shown on the finest mesh in the sequence used for the convergence study.

\subsection{Laplacian Form vs. Strain Form}\label{sec:numex2}

In this example we show that the strain formulation of Stokes equations poses no problem as we have a Korn inequality for our approximation. We consider a Poiseuille type problem in the domain $\Omega = (0,3)\times (0,1)\times (0,1/10)$
with boundary conditions $\bfu = (x_2(1-x_2),0,0)$ at $x_1=0$, at $x_2=0$, and at $x_2=1$, and with $u_3=0$ at $x_3=0$ and at $x_3=1/10$. 
In Fig. \ref{fig:velo} we show the velocity field in the $(x_1,x_2)$--plane and we note that the strain formulation gives a stress free condition at the outflow. In Fig. \ref{fig:pre} we show the corresponding pressure. The computations were made using the $\tilde{b}$--form of the side condition.

\begin{figure}
\begin{center}
\includegraphics[scale=0.25]{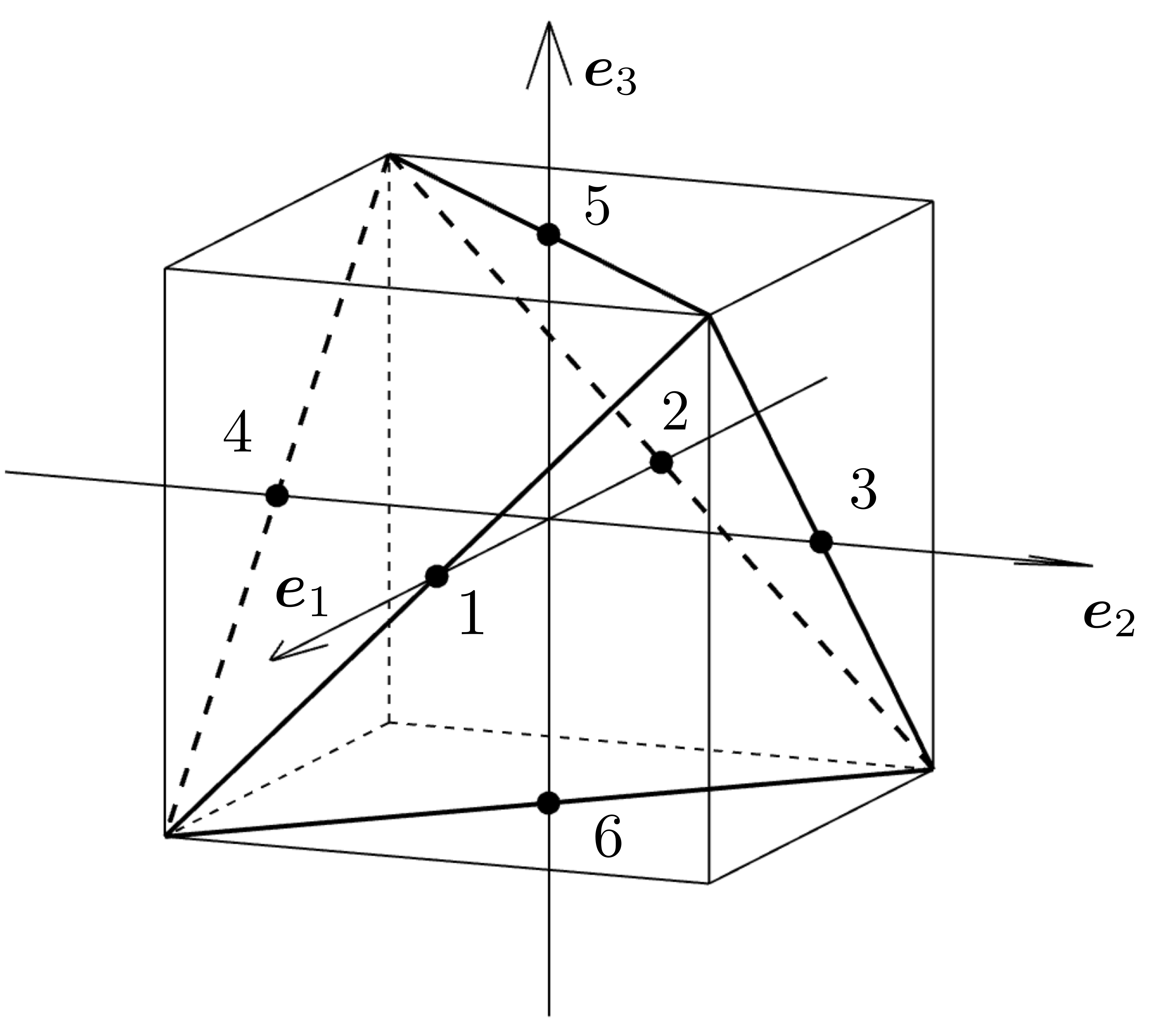}
\end{center}
\caption{The reference element and the enumeration of the six degrees of freedom.}
\label{fig:refelement}
\end{figure}

\begin{figure}
\begin{center}
\includegraphics[scale=0.25]{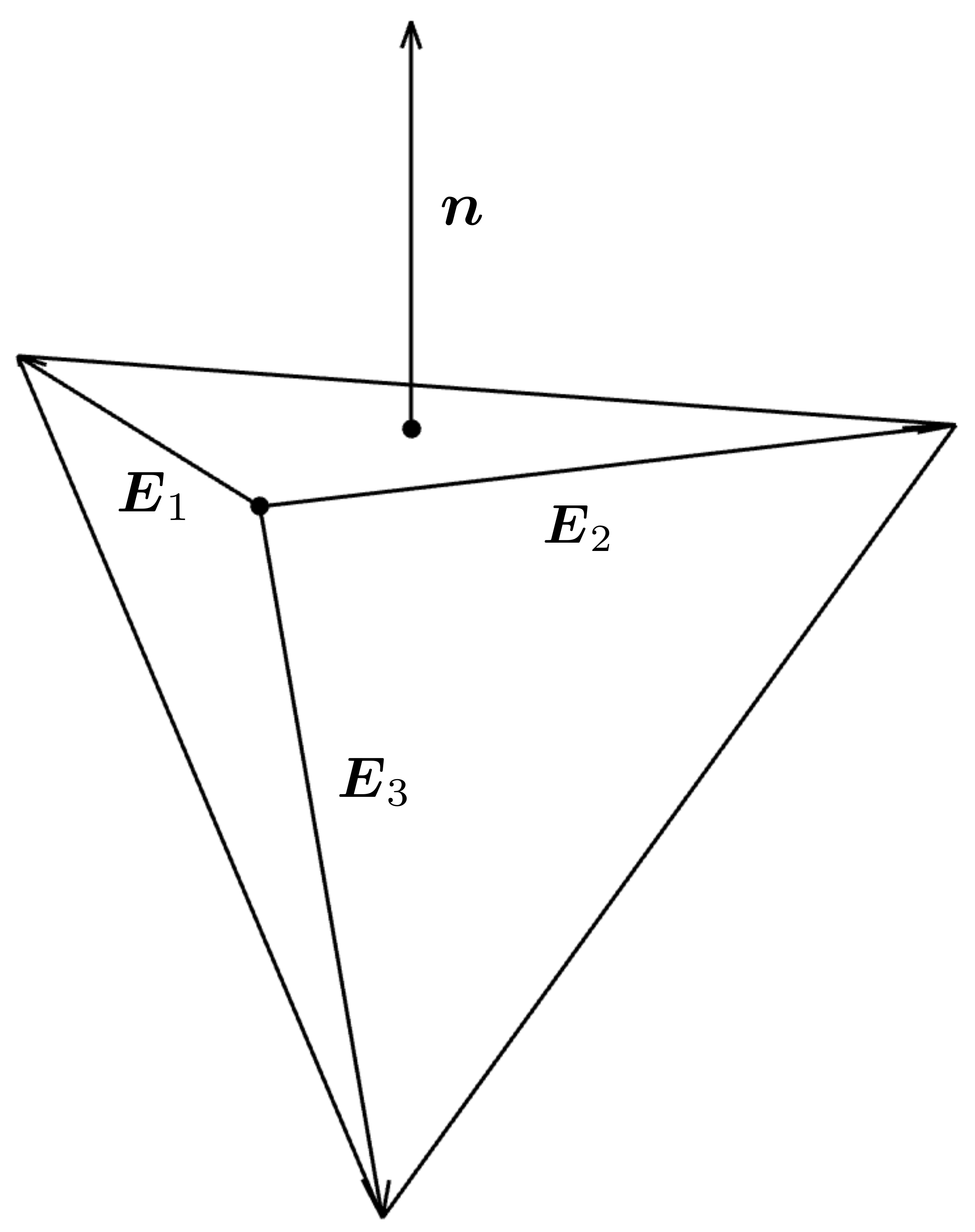}
\end{center}
\caption{The normal $\bfn$ and the three edge vectors $\{\bfE_1,\bfE_2,\bfE_3\}$.}
\label{fig:infsup}
\end{figure}

\begin{figure}
\begin{center}
\includegraphics[scale=0.25]{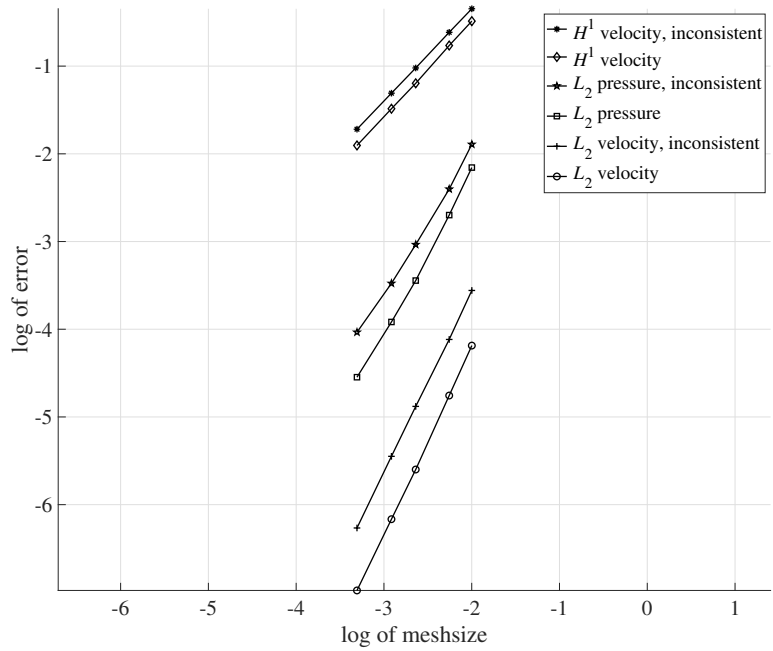}
\end{center}
\caption{Convergence of velocity and pressure for the pressure inconsistent $\tilde{b}$--form and the consistent $b$--form.}
\label{fig:convergence}
\end{figure}
\begin{figure}
\begin{center}
\includegraphics[scale=0.12]{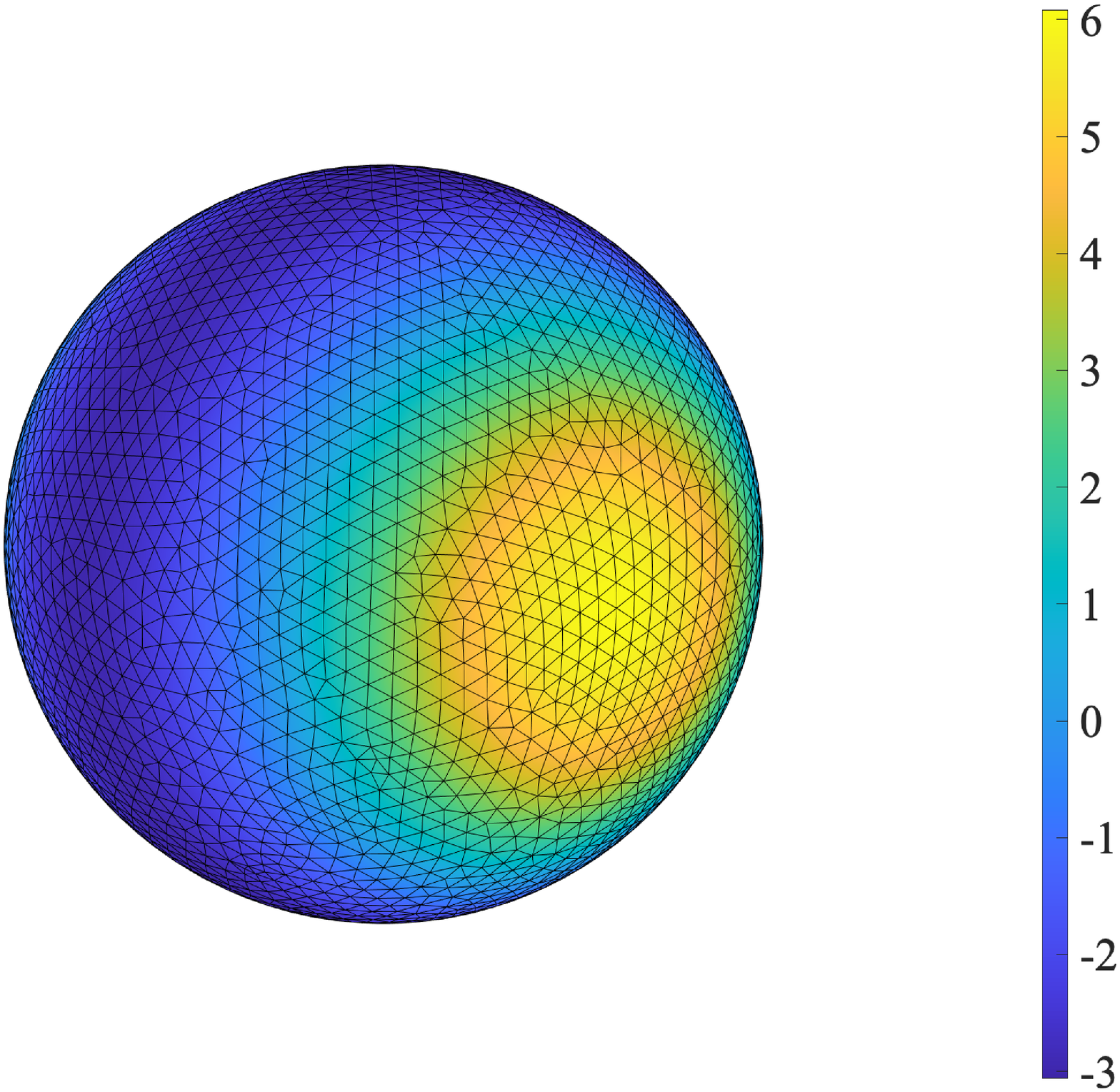}\includegraphics[scale=0.12]{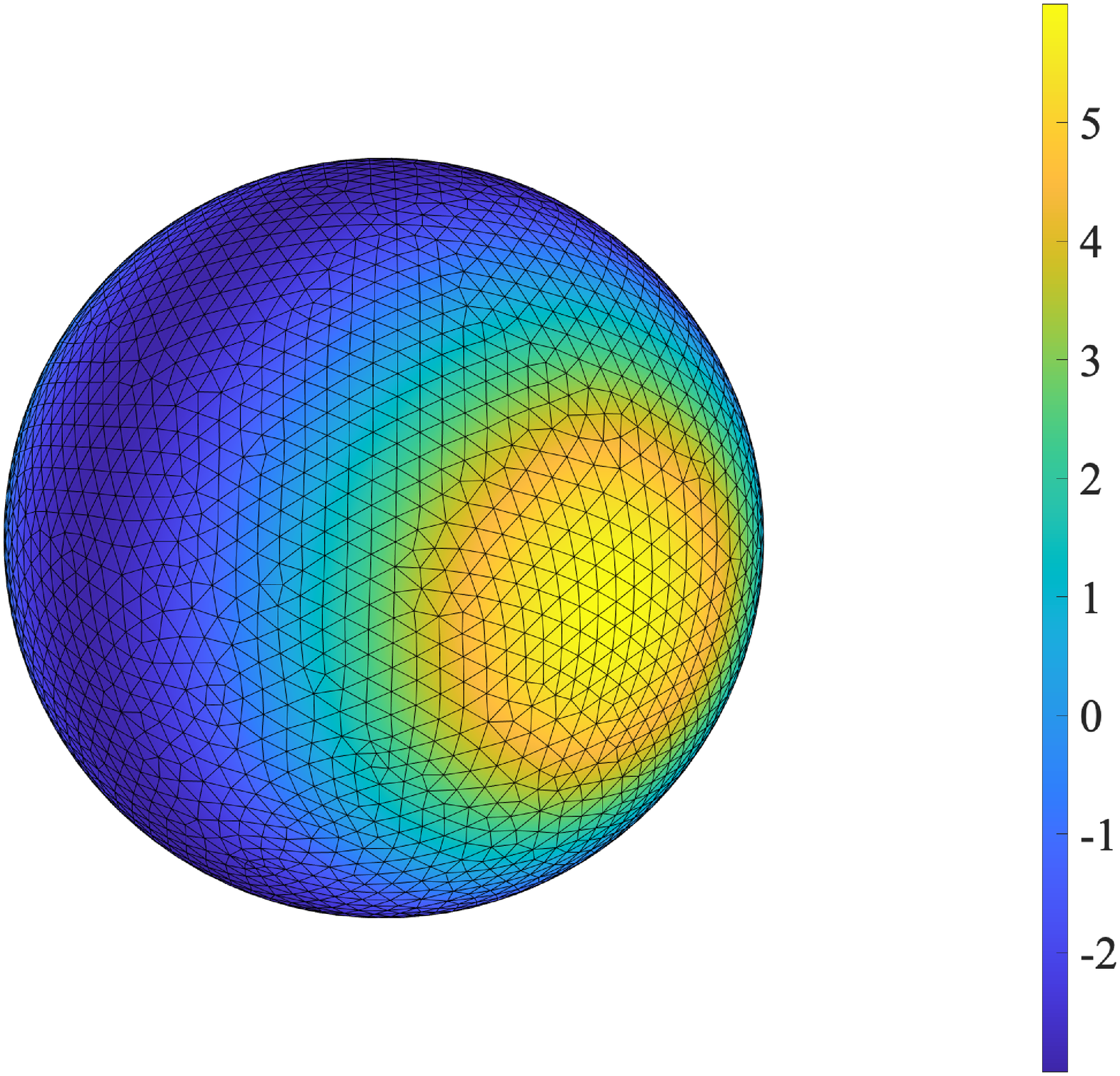}\includegraphics[scale=0.12]{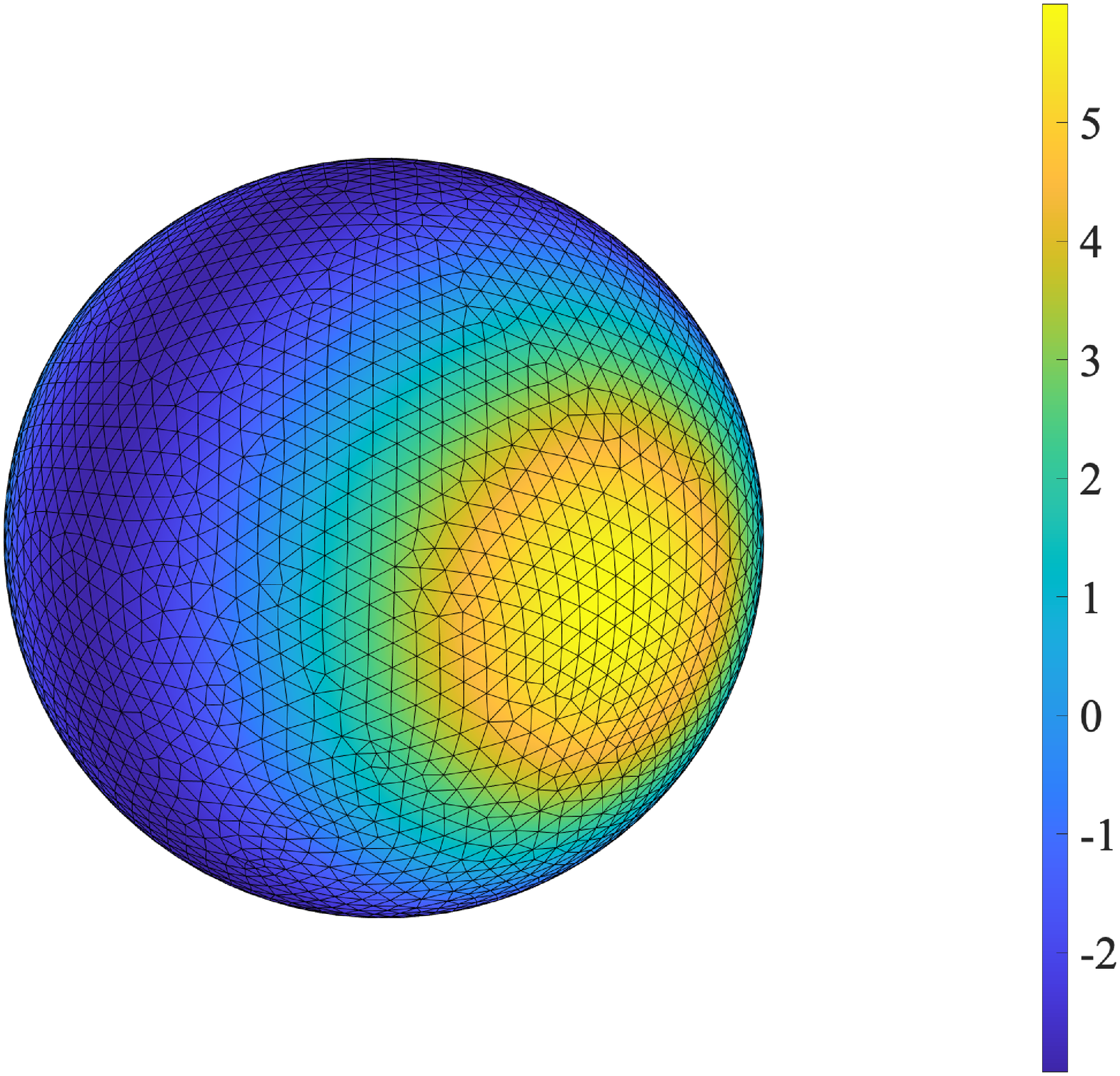}
\end{center}
\caption{Pressure solutions: inconsistent (left), consistent (center) interpolated (right).}
\label{fig:pressure}
\end{figure}
\begin{figure}
\begin{center}
\includegraphics[scale=0.2]{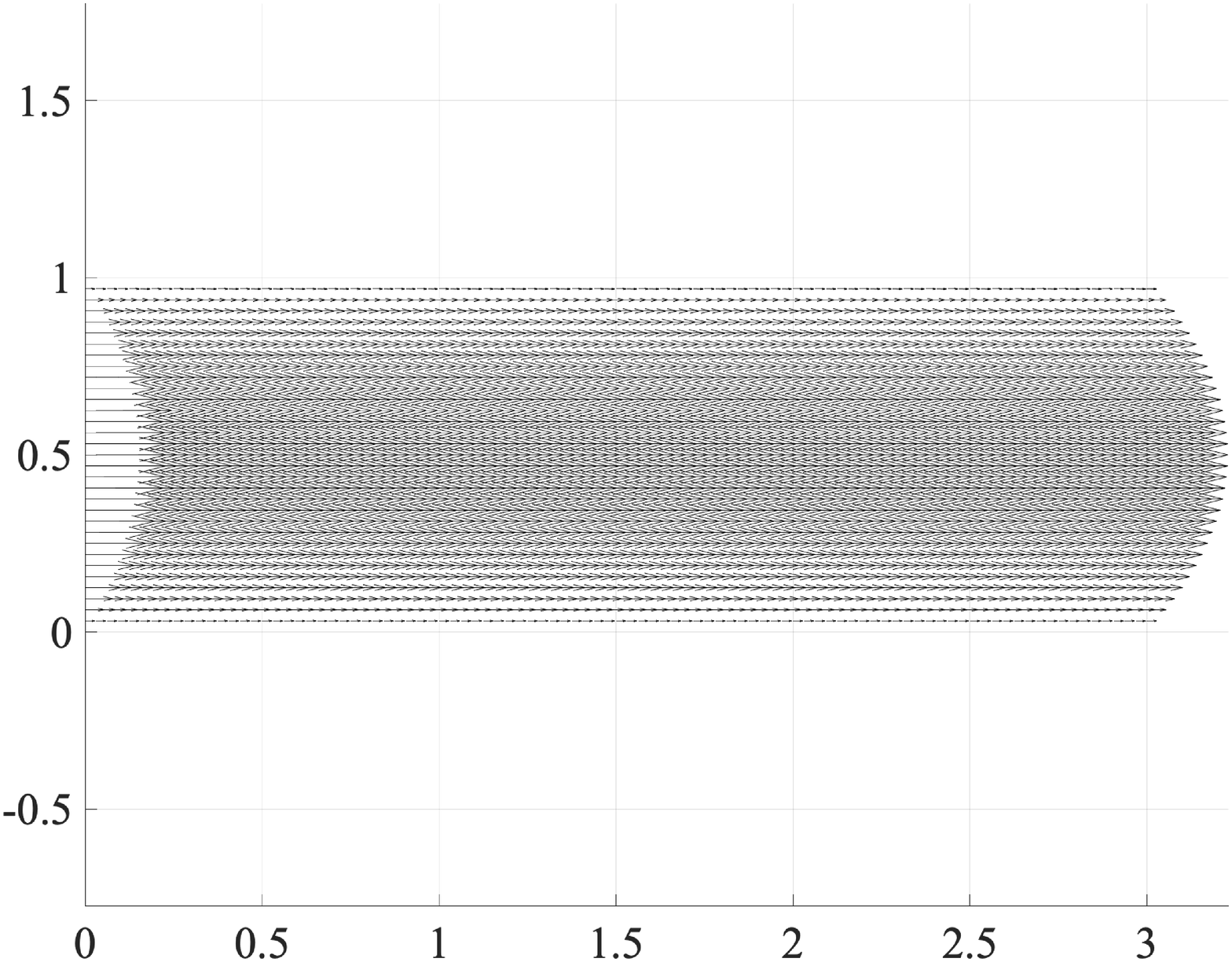}\includegraphics[scale=0.2]{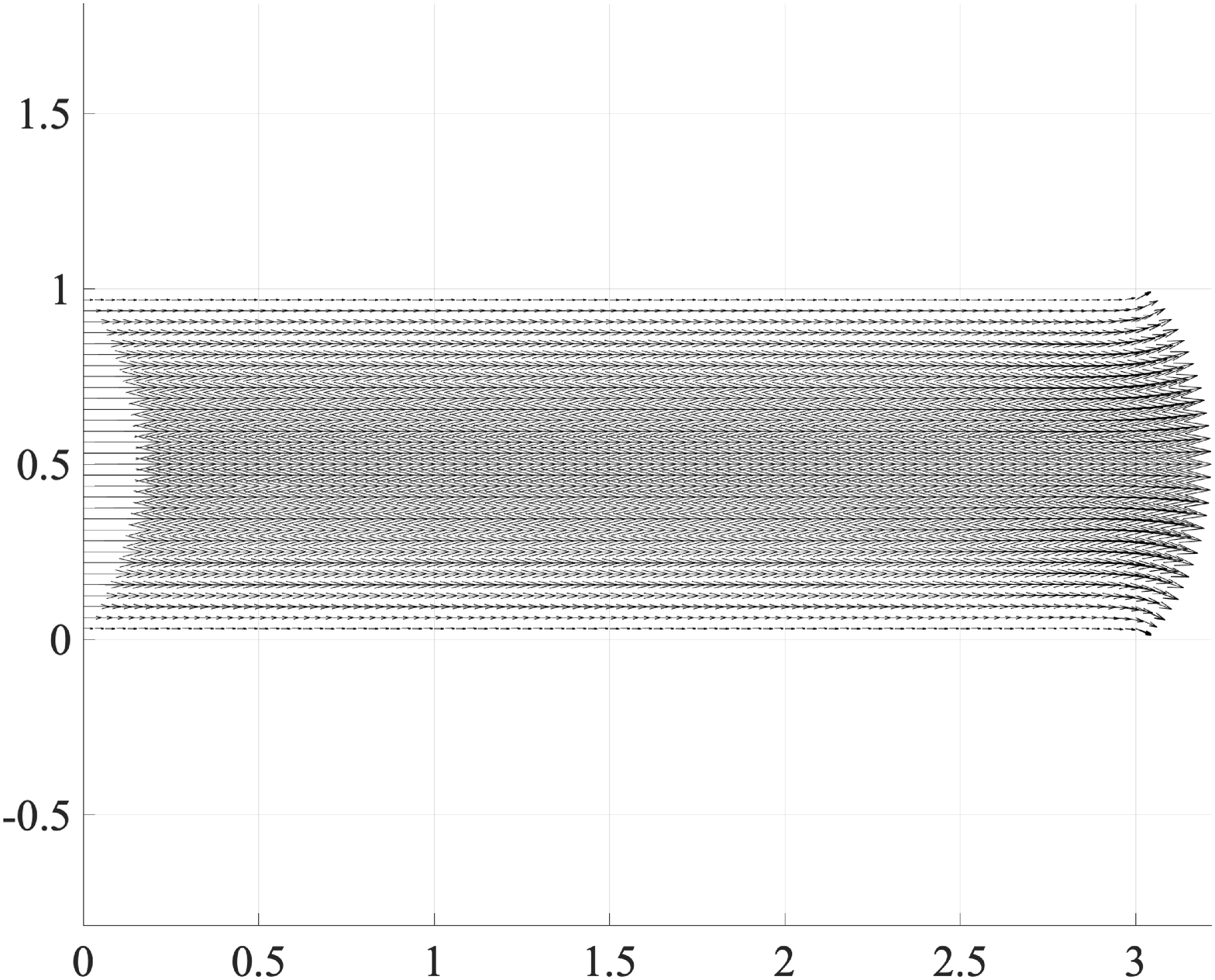}\end{center}
\caption{Velocity for the Laplacian form (left) and strain form (right).}
\label{fig:velo}
\begin{center}
\includegraphics[scale=0.2]{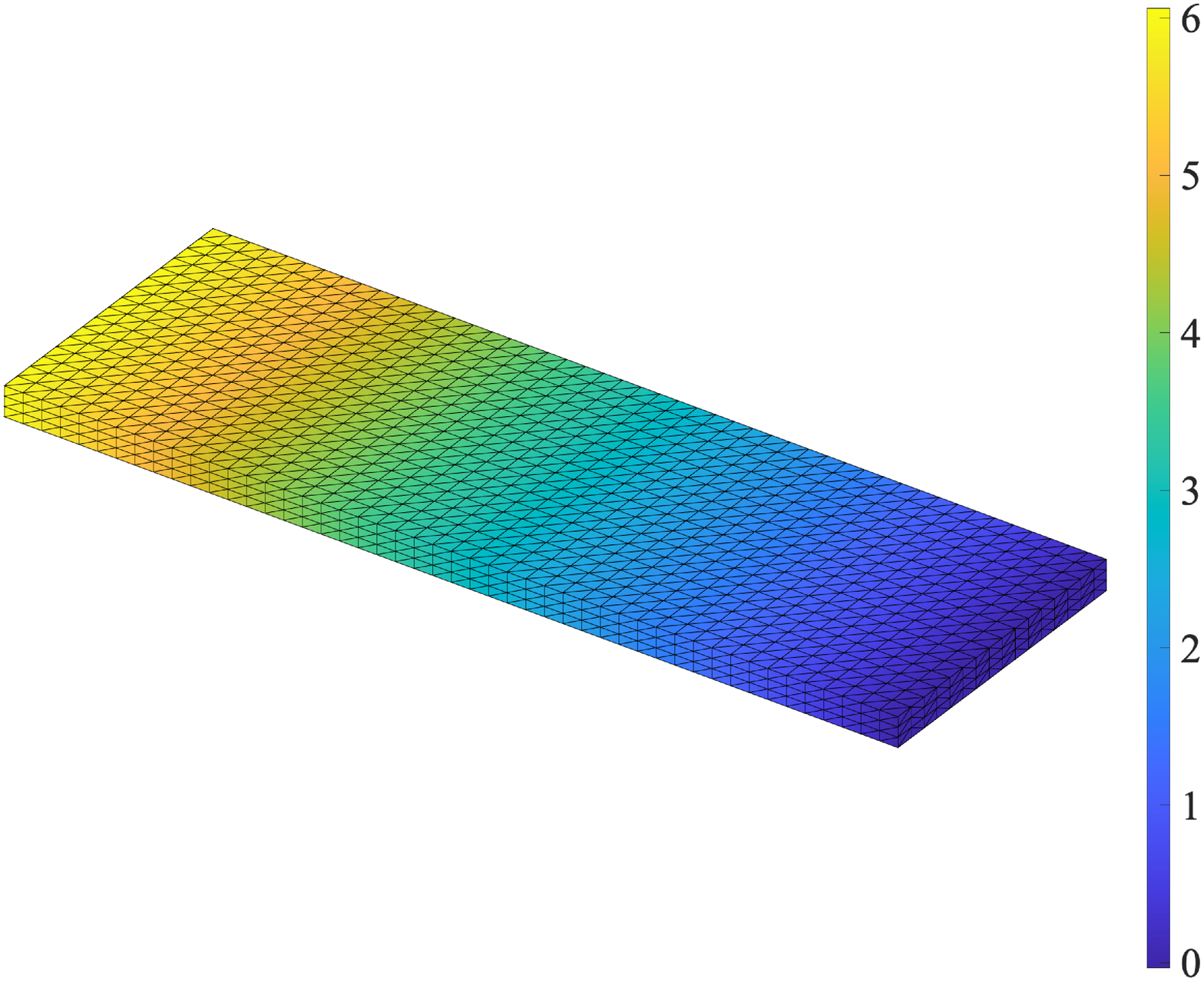}\includegraphics[scale=0.2]{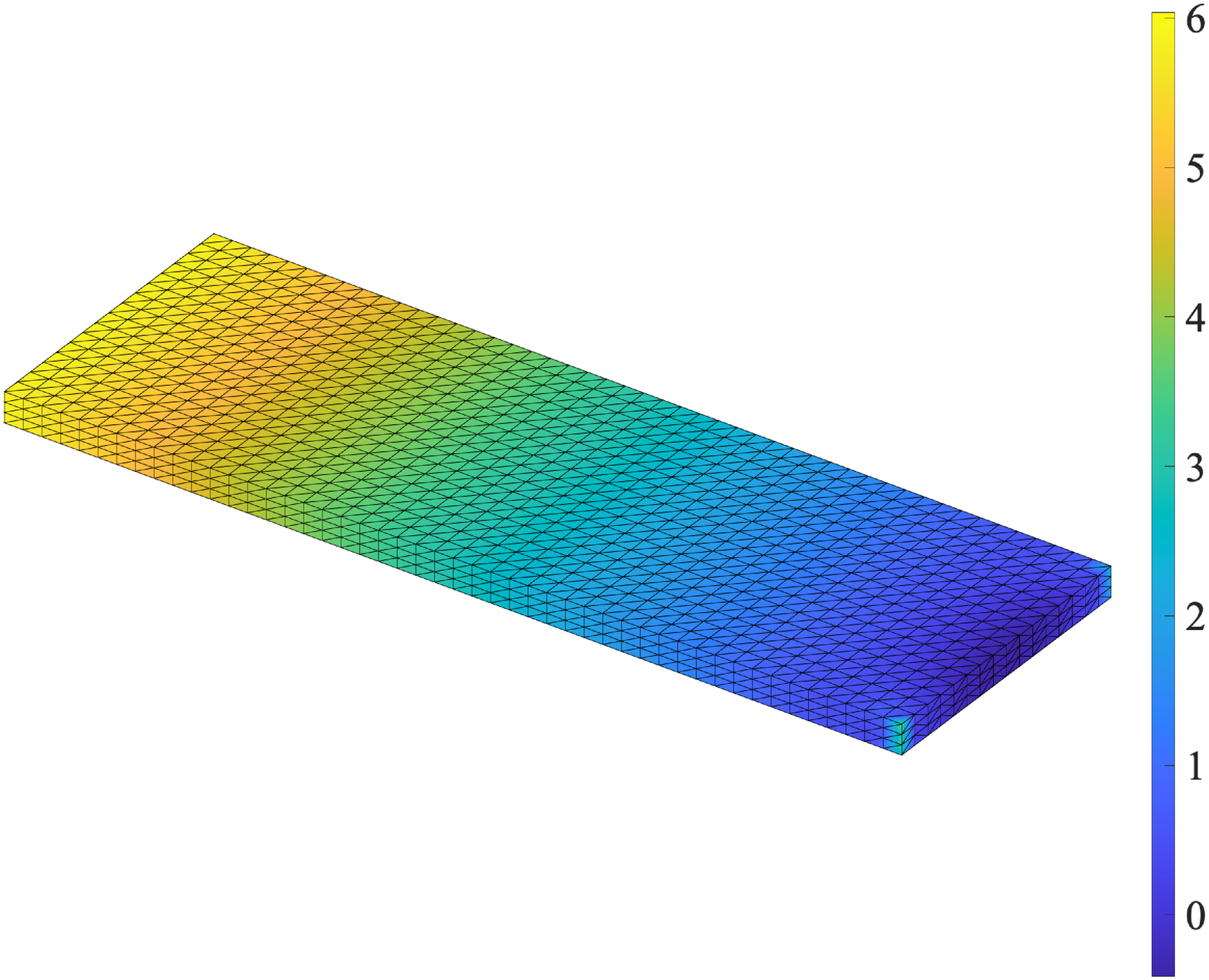}\end{center}
\caption{Pressure for the Laplacian form (left) and strain form (right).}
\label{fig:pre}
\end{figure}
\appendix

\section{INF-SUP CONDITION FOR THE $\boldmath{\tb}$ FORM}
\label{sec:appendix}

We shall now verify that the the inf-sup condition (\ref{eq:infsupsmall}) holds also for the form $\tb$.  The element contribution for $\tb$ is 
\begin{align}
\tb_T(\bfv,q) &= (\nabla \cdot \bfv, q)_T 
=  - (\bfv,\nabla  q)_T + (\bfn \cdot \bfv, q )_{\partial T}
\end{align}
and as in the proof of Lemma \ref{lem:step2} we take
\begin{equation}
\bfv_* = - \sum_{E \in \mcE_h^I(T)} (\bft_E \cdot \nabla q(\bfx_E) ) \bft_E  \varphi_E 
\end{equation}
which gives 
\begin{align}
\tb_T(\bfv,q) 
&=  \sum_{E \in \mcE_h^I(T)}  (\bft_E \cdot \nabla  q(\bfx_E) \varphi_E ,\bft_E \cdot \nabla  q)_T - (\bfn \cdot \bft_E (\bft_E \cdot \nabla q (\bfx_E))  \varphi_E, q - P_{0,T} q )_{\partial T}
\end{align}
where $P_{0,T}$ is the $L^2$-projection on constants on $T$.  Here we used the fact that 
\begin{equation}\label{eq:app-a}
(\bfn \cdot \bft_E \bft_E \cdot \nabla q (\bfx_E))  \varphi_E,  1 )_{\partial T} = 0
\end{equation}
to subtract $P_{0,T} q$.  Identity (\ref{eq:app-a}) holds since if $F$ is a face that has $E$ as one of its edges then $\bfn \cdot \bft_E = 0$ 
and if $F$ does not have $E$ as an edge then $\varphi_E$ is zero in the midpoints,  which imply $(\varphi_E,1)_F=0$ since midpoint 
quadrature is exact for quadratic polynomials on triangles.

Next using the fact that  $q$ is linear on $T$ we may write
\begin{equation}
q = a + \bfb\cdot(\bfx - \bfx_T)
\end{equation}
where $\bfx_T$ is the center of gravity of $T$.  We then have $(I - P_{0,T}) q = \bfb\cdot(\bfx - \bfx_T)$ and 
\begin{align}
\tb_T(\bfv_*,q) 
&=  \sum_{E \in \mcE_h^I(T)}  (\bft_E \cdot \bfb  \varphi_E(\bfx) ,\bft_E \cdot \bfb )_T - (\bfn \cdot \bft_E (\bft_E \cdot \bfb)  \varphi_E(\bfx), \bfb\cdot (\bfx - \bfx_T) )_{\partial T}
\end{align}
We now study the contributions for each $E \in \mcE_h^I$. The bulk term may be directly computed using (\ref{eq:quadrature}) as
\begin{equation}
 (\bft_E \cdot \bfb  \varphi_E (\bfx) ,\bft_E \cdot \bfb )_T =  (\bft_E \cdot \bfb)^2 \int_T \varphi_E(\bfx) =  (\bft_E \cdot \bfb)^2  |T|/6
\end{equation}
The formula holds for general elements since the Jacobian of the 
affine mapping is constant.  It remains to study  the boundary contribution 
\begin{align}
g(\bfb) = -  \bft_E \cdot \bfb \int_{\partial T}  \bfn \cdot \bft_E  \varphi_E(\bfx) \bfb\cdot (\bfx - \bfx_T)
\end{align}
which is a quadratic function in $\bfb$.

\paragraph{Computation on the Reference Element}

Let us first look at the reference element $\hat T$ as defined in Section \ref{q1app}.  Expanding $\bfb$ in the orthonormal basis $\{\bfe_1, \bft_E, \bfs_E\}$,  we obtain by symmetry 
\begin{align}
 \int_{\partial T}  \bfn \cdot \bft_E  \varphi_E \bfb \cdot (\bfx - \bfx_T)
= 
\begin{cases}
0 & \bfb = \bfe_1
\\
0 &\bfb = \bfs_E
\end{cases}
\end{align}
Therefore,  
\begin{align}
g(\bfb) = - (\bft_E\cdot \bfb)^2  \int_{\partial T}  \bfn \cdot \bft_E  \varphi_E(\bfx) \bft_E \cdot (\bfx - \bfx_T) 
=  - 2 (\bft_E\cdot \bfb)^2  \underbrace{ \int_{F^*(E)}  \bfn \cdot \bft_E  \varphi_E(\bfx) \bft_E \cdot (\bfx - \bfx_T) }_{I}
\end{align}
where $F^*(E)$ is one of the two faces faces to which $E$ does not belong and we again used symmetry to conclude 
that the contributions from the two faces are identical.  We thus obtain
\begin{equation}
b(\bfv_*,q) = \sum_{E \in \mcE_h(T)} (\bft_E \cdot \bfb)^2 ( | T|/6 -   2 I )
\end{equation}
Here $|T|/6 = (8/3)/6 = 4/9$ and direct computation gives $2I= 4/15$ and thus
\begin{align}\label{eq:app-b}
\boxed{ b(\bfv_*,q) = \sum_{E \in \mcE_h^I(T)} (\bft_E \cdot \bfb)^2 \left( \frac{4}{9} - \frac{4}{15}\right) = \frac{8}{45}  \sum_{E \in \mcE_h^I(T)} (\bft_E \cdot \bfb)^2}
\end{align}

\paragraph{Mapped Element} Next consider a mapped element 
\begin{align}
F: {\hat T} \rightarrow T
\end{align}
where $F$ is an affine map of the form
\begin{equation}
\bfx = F(\hat \bfx) = A \hat \bfx + \bfx_T 
\end{equation} 
which means that 
\begin{equation}
\bfx -  \bfx_T= F(\hat \bfx) = A \hat \bfx 
\end{equation} 
where we use the usual hat notation for quantities on the reference element.  Tangent vectors are mapped to tangent vectors
\begin{equation}
A \hat\bft_E = \| A \hat\bft_E \| \bft_E 
\end{equation}
where 
\begin{equation}
\| A \hat\bft_E \| = h_E / h_{\hat E}
\end{equation}
and given $\bfb$ associated with $T$ we define 
\begin{equation}
\hat \bfb_E = A^T \bfb / \| A \hat{\bft}_E \|
\end{equation}
which leads to the identity 
\begin{equation}
 \hat \bft_E \cdot \hat \bfb_E =  \bft_E \cdot \bfb   
\end{equation}
As above the element contribution takes the form 
\begin{align}
 \int_T \bft_E \cdot \bfb  \varphi_E(\bfx) \bft_E \cdot \bfb  
 &= (\bft_E \cdot \bfb)^2 |T|/6  = |T | \, | \hat T|^{-1}  (\hat \bft_E \cdot \hat \bfb_E)^2 |\hat T|/6
\end{align}
Next for the boundary contribution
\begin{align}
&\bft_E \cdot \bfb \int_F \bfn \cdot \bft_E  \varphi_E(\bfx) \bfb\cdot (\bfx - \bfx_T)  dF
\\ 
&\qquad  =
 \hat \bft_E \cdot\hat \bfb \int_{\hat{F}} \hat{\varphi}_{\hat{E}}(\hat \bfx ) (A^T \bfb ) \cdot \hat{\bfx}\, \bfn \cdot \bft_E  |A|_F d\hat F
\\
&\qquad  =
 |T| \, |\hat T|^{-1}  \hat \bft_E \cdot \hat \bfb
 \int_{\hat{F}} \hat \bfn \cdot \hat \bft_E \, \hat{\varphi}_{\hat{E}}(\hat \bfx )\, \| A \hat \bft_{\hat E} \|^{-1} A^T \bfb  \cdot \hat {\bfx} d\hat F
 \\
&\qquad  =
 |T| \, |\hat T|^{-1}  \hat \bft_E \cdot \hat \bfb
 \int_{\hat{F}} \hat \bfn \cdot \hat \bft_E \,\hat{\varphi}_{\hat{E}}(\hat \bfx )\,  \hat \bfb  \cdot \hat {\bfx} d\hat F
\end{align}
Here $|A|_F$ is the Jacobian associated with the mapping $A:\hat F \rightarrow F$ and we used the identity 
\begin{align}\label{eq:facecaj}
\bfn \cdot \bft_E  |A|_F = |T| \, |\hat T|^{-1} \| A \hat \bft_{\hat E} \|^{-1} \hat \bfn \cdot \hat \bft_{\hat E}
\end{align}
which we verify below.   We conclude that after transformation back to the reference element we get 
an expression that has the same form as in the reference element case and thus  we may apply 
(\ref{eq:app-b}) to get 
\begin{align}
b_T(\bfv_*, q) = |T| \, |\hat T|^{-1}  \frac{8}{45}  \sum_{E \in \mcE_h^I} (\hat \bft_E\cdot \hat \bfb_E )^2 
= |T| \, |\hat T|^{-1}  \frac{8}{45}  \sum_{E \in \mcE_h^I} ( \bft_E\cdot \bfb )^2 
= |\hat T|^{-1}  \frac{8}{45}  \sum_{E \in \mcE_h^I} \| \bft_E\cdot \nabla q \|_T^2
\end{align}
Summing over $T \in \mcT_h$ gives
\begin{align}
\boxed{ b(\bfv_*, q) = \sum_{T \in \mcT_h} b_T(\bfv_*, q) =|\hat T|^{-1}  \frac{8}{45}  \sum_{T \in \mcT_h}  \sum_{E \in \mcE_h^I} \| \bft_E\cdot \nabla q \|_T^2 \gtrsim \sum_{T \in \mcT_h} \| \nabla q \|^2_T }
\end{align}
 and we note that the deformation of the element only has an effect on the last inequality.
 
 \paragraph{Verification of (\ref{eq:facecaj})}
 Let $\{\bfE_1,\bfE_2,\bfE_3\}$ be edge vectors to $T$ such that, by well known relations with $\det(\bfa, \bfb, \bfc)$ as the determinant of the matrix whose columns are $\bfa$, $\bfb$, and $\bfc$, 
 \begin{equation}
6 |T| = \det(\bfE_1, \bfE_2, \bfE_3)
 \end{equation}
and 
\begin{equation}
2|F|= \det(\bfn, \bfE_2,\bfE_1)
\end{equation}
see Fig.  \ref{fig:infsup}.  
We then have 
\begin{align}
6 |T| &= \det(\bfE_1,\bfE_2,\bfE_3) = \det(\bfE_1,\bfE_2,(\bfE_3\cdot \bfn) \bfn ) = \det(\bfE_1,\bfE_2,(\bfE_3\cdot \bfn) \bfn )
\\
&\qquad 
= (\bfE_3\cdot \bfn) \det(\bfE_1,\bfE_2, \bfn ) 
= -(\bfE_3\cdot \bfn) \det(\bfn,\bfE_2,  \bfE_1) = -(\bfE_3\cdot \bfn) 2 |F|
\end{align}
Therefore 
\begin{align}
-(\bfE_3\cdot \bfn) |F| = 3|T| = |T| \,  | {\hat{T}}|^{-1}  3 | {\hat{T}}| = - |T| \,  | {\hat{T}}|^{-1} (\hat \bfE_3\cdot \hat \bfn) |\hat F|
\end{align}
or 
\begin{align}
h_E (\bft_3\cdot \bfn) |F| = |T| \,  | {\hat{T}}|^{-1} \hat{h}_E (\hat \bft_3\cdot \hat \bfn) |\hat F|
\end{align}
which finally imply 
\begin{align}
 (\bft_3\cdot \bfn) |F|  = |T| \,  | {\hat{T}}|^{-1} \hat{h}_E h_E^{-1} (\hat \bft_3\cdot \hat \bfn) |\hat F| 
 = |T| \,  | {\hat{T}}|^{-1} \| A \hat{\bft}_3 \|^{-1} (\hat \bft_3\cdot \hat \bfn) |\hat F|
\end{align}
\paragraph{\bf Acknowledgement} This research was supported in part by the Swedish Research
Council Grants Nos.\  2017-03911, 2018-05262,  2021-04925,  and the Swedish
Research Programme Essence.


\begin{thebibliography}{1}

\bibitem{BrFo91}
F.~Brezzi and M.~Fortin.
\newblock {\em Mixed and hybrid finite element methods}, volume~15 of {\em
  Springer Series in Computational Mathematics}.
\newblock Springer-Verlag, New York, 1991.

\bibitem{CrRa73}
M.~Crouzeix and P.-A. Raviart.
\newblock Conforming and nonconforming finite element methods for solving the
  stationary {S}tokes equations. {I}.
\newblock {\em Revue Fran\c caise Automatique Informatique Recherche
  Op\'erationnelle S\'erie Rouge}, 7(R-3):33--75, 1973.

\bibitem{Ha11}
P.~Hansbo.
\newblock A nonconforming rotated {$Q_1$} approximation on tetrahedra.
\newblock {\em Computer Methods in Applied Mechanics and Engineering},
  200(9-12):1311--1316, 2011.

\bibitem{Ha12}
P.~Hansbo.
\newblock Nonconforming rotated {$Q_1$} tetrahedral element with explicit time
  stepping for elastodynamics.
\newblock {\em Internat. J. Numer. Methods Engrg.}, 91(10):1105--1114, 2012.

\bibitem{HaLa16}
P.~Hansbo and F.~Larsson.
\newblock The nonconforming linear strain tetrahedron for a large deformation
  elasticity problem.
\newblock {\em Comput. Mech.}, 58(6):929--935, 2016.

\bibitem{RaTu92}
R.~Rannacher and S.~Turek.
\newblock Simple nonconforming quadrilateral {S}tokes element.
\newblock {\em Numerical Methods for Partial Differential Equations},
  8(2):97--111, 1992.

\bibitem{TaHo73}
C.~Taylor and P.~Hood.
\newblock A numerical solution of the {N}avier--{S}tokes equations using the
  finite element technique.
\newblock {\em Computers \& Fluids}, 1(1):73--100, 1973.

\bibitem{Ver84}
R.~Verf\"{u}rth.
\newblock Error estimates for a mixed finite element approximation of the
  {S}tokes equations.
\newblock {\em RAIRO Anal. Num\'{e}r.}, 18(2):175--182, 1984.

\end{thebibliography}
\end{document}